\title{Set estimation when sampling from random lines}
\author{A. Cholaquidis, A. Cuevas and L. Moreno}
\definecolor{bu}{rgb}{0.5, 0.0, 0.13}
\newtheorem{lemma}{Lemma}
\newtheorem{definition}{Definition}
\newtheorem{theorem}{Theorem}
\newtheorem{proposition}{Proposition}
\newtheorem{remark}{Remark}
\definecolor{violet}{rgb}{0.7,0.2,0.6}
\newcommand{\eps}{\varepsilon}
\begin{document}

	\begin{center}
		\Large \bf On the notion of polynomial reach: \\ a statistical application
	\end{center}
	
	\begin{center}
		Alejandro Cholaquidis$^1$, Antonio Cuevas$^2$ and Leonardo Moreno$^3$\\
		\footnotesize 	$^1$ Centro de Matem\'atica, Universidad de la República, Uruguay\\
		$^2$ Departamento de Matem\'aticas, Universidad Aut\'onoma de Madrid\\ and Instituto de Ciencias Matem\'aticas ICMAT (CSIC-UAM-UCM-UC3M)\\
		$^3$ Instituto de Estadística, Universidad de la República, Uruguay
	\end{center}

\begin{abstract}
The volume function $V(t)$ of a compact set $S\in{\mathbb R}^d$ is just the Lebesgue measure of the set of points within a distance to $S$ not larger than $t$. According to some classical results in geometric measure theory, the volume function turns out to be a polynomial, at least in a finite interval, under a quite intuitive, easy to interpret, sufficient condition (called ``positive reach'')  which can be seen as an extension of the notion of convexity.
However, many other simple sets, not fulfilling the positive reach condition, have also a polynomial volume function. To our knowledge, there is no general, simple geometric description of such sets.  Still, the polynomial character of $V(t)$ has some relevant consequences since the polynomial coefficients carry some useful geometric information. In particular, the constant term is the volume of $S$ and the first order coefficient is the boundary measure (in Minkowski's sense). This paper is focused on sets whose volume function is polynomial on some interval starting at zero, whose length (that we call ``polynomial reach'') might be unknown. Our main goal is to approximate such polynomial reach by statistical means, using only a large enough random sample of points inside $S$. The practical motivation is simple: when the value of the polynomial reach, or rather a lower bound for it, is approximately known, the polynomial coefficients can be estimated from the sample points by using standard methods in polynomial approximation. As a result, we get a quite general method to estimate the volume and boundary measure of the set, relying only on an inner sample of points and not requiring the use any smoothing parameter. This paper explores the theoretical and practical aspects of this idea. 
\end{abstract}

\section{Introduction}\label{sec:intro}

Given a compact set $S\subset {\mathbb R}^d$ and $r>0$, the $r$-parallel set of $S$, denoted here by $B(S,r)$, is the set of all points in ${\mathbb R}^d$ for which there is a point of $S$ within a distance not larger than $r$; see below in this section for formal definitions. 

The ``volume function'' of $S$ is then defined by $V(r)=\mu(B(S,r))$, where $\mu$ denotes the Lebesgue measure on ${\mathbb R}^d$. As it happens, this function carries a lot of useful information on the geometry of $S$. In particular, $V(0)=\mu(S)$. Also, the limit, when it does exist,
\begin{equation}\label{Mink}
L(S)=\lim_{\varepsilon\to 0^+}\frac{\mu(B(S,\varepsilon)\setminus S)}{\varepsilon}=\lim_{\varepsilon\to 0^+}\frac{V(\varepsilon) - V(0)}{\varepsilon},
\end{equation}
 provides a natural way of defining the surface measure of $S$. The value $L(S)$ is called the \textit{outer Minkowski content} of $\partial S$; see Ambrosio et al. (2008) for a detailed study of this and other notions of surface measure. 

In many important cases the function $V$ is a polynomial of degree at most $d$ on some interval $[0,R]$. The best known example is given by the so-called ``sets of positive reach'' introduced in a celebrated paper by \cite{fed59}. The reach of a compact set $S$ is the supremum of the values $\mathbf{r}$ such that any point outside $S$ has only one metric projection on $S$; see below for more details. 

Of course, when the volume function $V$ of $S$ is a polynomial, the constant term is $V(0)=\mu(S)$ and the coefficient of the first-order term is $V'(0)=L(S)$. \\

\noindent \it The purpose of this work\rm 

The general aim of this work is to exploit the above mentioned polynomial assumption for statistical purposes. We follow the lines of  \cite{cuepat18} where it is shown that $V(r)$ can be consistently estimated from a random sample of points inside $S$. Then, if we denote $V_n(r)$ an estimator of $V$ based on a sample of size $n$, the coefficients $V(r)$ can be estimated by a minimal distance procedure, just approximating $V_n$ from the closest polynomial of degree $d$ on the interval $[0,R]$ of validity of the polynomial assumption.

The present paper addresses two topics in this framework. First and foremost, we consider (from both the theoretical and practical  point of view) the estimation of the ``polynomial reach'' ${\mathbf R}$, that is, the maximum value of $R$  for which the polynomial assumption holds on $[0,R]$. This allows us, as an important by-product, to address the statistical estimation of $L(S)$ and $\mu(S)$ from a random sample of points, which remains as the primary motivation of the whole study. To be more precise, if  our main goal is to estimate the coefficients of  the polynomial volume we do not need in fact to estimate the polynomial reach ${\mathbf R}$. A infra-estimation of this parameter would be enough, and safer than a possible over estimation which might lead to an erroneous polynomial fit for the volume function. 

The word ``reach'' is used here by analogy with the ordinary, geometric notion of Federer's reach, mentioned above. Such analogy is motivated by the fact that, as proved by Federer (1959), if the reach ${\mathbf r}$ of a compact set is positive, then the volume function  is a polynomial on $[0,{\mathbf r}]$. However this sufficient condition is by no means necessary, since many simple sets with ${\mathbf r}=0$ have a polynomial volume on some interval. So if we are just interested in the polynomial volume property  we could perhaps focus on ${\mathbf R}$ rather than ${\mathbf r}$.

We will comment at some more detail these statistical aspects in Section \ref{sec:perspective}. However, let us now advance that 
\begin{itemize}
	\item[(a)] we will assume that our sample information comes just from an inside sample on $S$, unlike other approaches that also require sample information outside $S$; see Section \ref{sec:perspective} for  details and references.
	\item[(b)] Our proposal does not require to estimate the set $S$ itself as a preliminary step. 
	\item[(c)]  The conditions imposed on $S$ are lighter than others appearing in the literature.
\end{itemize}

\noindent \it Some notation and preliminary definitions\rm

Given a set $S\subset \mathbb{R}^d$, we will denote by
$\mathring{S}$ and $\partial S$ the interior and boundary of $S$,
respectively with respect to the usual topology of $\mathbb{R}^d$. 

The parallel set of $S$ of radius $\varepsilon$ will be denoted as $B(S,\eps)$, that is
$B(S,\eps)=\{y\in{\mathbb R}^d:\ \inf_{x\in S}\Vert y-x\Vert\leq \eps \}$.
If $A\subset\mathbb{R}^d$ is a Borel set, then $\mu_d(A)$ (sometimes just $\mu(A)$) will denote 
its Lebesgue measure.
We will denote by $B(x,\varepsilon)$  the closed ball
in $\mathbb{R}^d$,
of radius $\varepsilon$, centred at $x$, and $\omega_d=\mu_d(B(x,1))$.

Given two compact non-empty sets $A, C \subset{\mathbb R}^d$, 
the \it Hausdorff distance\/ \rm or \it Hausdorff-Pompei distance\/ \rm between $A$ and $C$ is defined by
\begin{equation}
	d_H(A,C)=\inf\{\eps>0: \mbox{such that } A\subset B(C,\eps)\, \mbox{ and }
	C\subset B(A,\eps)\}.\label{Hausdorff}
\end{equation}

If $I$ is an interval in ${\mathbb R}$ denote by $L^2(I)$ the space of real square-integrable functions defined on $I$, endowed with the usual norm, $\Vert f\Vert_{L^2(I)}=\sqrt{\int_I f^2(s)ds}$.

Let us denote $V(r)= \mu(B(S,r))$, for $r\geq 0$, the volume function of the set $S$. If $\aleph_n=\{X_1,\ldots,X_n\}$ stands for a sample of points $X_i$ on $S$ we will denote by  $$V_n(r)=\mu(B(\aleph_n,r))$$ 
the empirical volume function. 

Given $r>0$ and a closed interval $I$ in $[0,\infty)$, we denote by $P_{n,\ell}^I$  and $P_\ell^I$, respectively, the best approximations, by polynomials of degree at most $\ell\geq d$, of $V_n$ and $V$, with respect to  the $L^2$ norm. That is, if $\Pi_\ell(I)$ denotes the (closed) subspace of all polynomials of degree at most $\ell\in \mathbb{N}$ in $L^2(I)$,
\begin{equation}\label{eq2}
	P_{n,\ell}^I=\text{argmin}_{\pi \in \Pi_\ell(I)} \Vert V_n-\pi\Vert_{L^2(I)}, \quad \text{and}\quad  P_\ell^I=\text{argmin}_{\pi \in \Pi_\ell(I)} \Vert V-\pi \Vert_{L^2(I)}.
\end{equation}

As indicated below, we are particularly interested on $P_{n,d}^{I}$, $d$ being the dimension of our data points. Let us denote  $P_{n,d}^{I}(t)=\theta_{0n}+\dots+\theta_{dn}t^d$ for $t\in I\subset [0,\mathbf{R}]$.
In practice, the computation of $P_{n,d}^{I}$ from \eqref{eq2} is done numerically in a simple way: the values $\theta_{jn}$ are just fitted as the coefficients of a linear regression model where the response variables are values $V_n(r_i)$, calculated, by simulation, with arbitrary precision, for a given grid of points $r_1<\cdots<r_N$ in $I$.  

Following the notation in \cite{fed59}, let ${\rm \text{Unp}}(S)$ be the set of points $x\in \mathbb{R}^d$ 
with a unique metric projection on $S$.

For $x\in S$, let \mbox{reach}$(S,x)=\sup\{r>0:\mathring{B}(x,r)\subset {\mbox{Unp}}(S)\big\}$. 
The \textit{reach of $S$} is then defined by $\mbox{reach}(S)=\inf\big\{\mbox{reach}(S,x):x \in S\big\}$, and $S$
is said to be of positive reach if ${\mathbf r}:=\mbox{reach}(S)>0$. 

\

A set $S\subset \mathbb{R}^d$ is said to be \textit{standard} with respect to a
Borel measure $\nu$ if there exists $\lambda>0$ and $\delta>0$ such that, for all $x\in S$,
\begin{equation} \label{estandar}
	\nu(B(x,\eps)\cap S)\geq \delta \mu_d(B(x,\eps)),\quad 0<\eps\leq \lambda.
\end{equation}

 Informally speaking, this condition prevents the set $S$ from being ``too spiky'' with respect to the measure $\nu$. See, e.g., \cite{cue04} and references therein for details on the use of the standardness condition in set estimation. 

\

In addition to the \textit{outer Minkowski content} \eqref{Mink}, an alternative way of measuring  is the two-side version, simply known as \textit{Minkowski content},
\begin{equation}\label{Mink-two}
	L_0(S)=\lim_{\varepsilon\to 0^+}\frac{\mu(B(\partial S,\varepsilon))}{2\varepsilon}.
\end{equation}	
The relation of this notion with its one-sided version \eqref{Mink} and with the, perhaps more popular, concept of $(d-1)$-dimensional Hausdorff measure 
${\mathcal H}^{d-1}(\partial S)$ (that will be mentioned below in the proof of Lemma 1) is analyzed in \cite{amb08}.

\

\noindent \textit{Organization of this work}

In the following section  the notion of polynomial reach is formally introduced. Also, some perspective and motivation are given in order to show the usefulness of such notion.  The estimation of the polynomial reach from a random sample of points is considered in Section 3. Two methods are proposed: one of them is asymptotically consistent to the true value of the reach, but not that useful in practice; the other one provides a infra-estimation, which is enough for most practical purposes. Convergence rates for the estimation of the polynomial coefficients are derived in Section 4. Some numerical experiments are commented in Section 5. Finally, a few conclusions are briefly commented in Section 6, a few technical proofs are included in Appendix A and some tables with numerical outputs are provided in Appendix B.

\section{Some perspective and motivation. The notion of polynomial reach}\label{sec:perspective}

As mentioned above, our aim here is exploiting a geometric idea to address some statistical problems in the setup of set estimation. The general purpose of this theory is to reconstruct a (compact) set $S$ from the information provided by a random sample of points. A brief survey can be found in \cite{cue09}. See, e.g., \cite{chola:14} and \cite{aar17} for more recent references, including connections to manifold learning and other relevant topics. 

In many cases one is mostly interested in estimating a functional 
of $S$, typically the Lebesgue measure $\mu(S)$ or the outer Minkowski content (boundary measure) $L(S)$ as defined in \eqref{Mink}.
Such problems have been addressed in the literature from different strategies, which we next summarize.
\begin{itemize}
	\item[A)]  \textit{Plug-in approaches,} based on a shape assumption on $S$. For example, if $S$ is assumed to be convex it would be quite natural to use the volume or the boundary measure of the convex hull of the sample $\aleph_n=\{X_1,\ldots,X_n\}$ as an estimator of the  values $\mu(S)$ and $L(S)$, respectively. See, e.g., \cite{bal21} for a recent reference on the plug-in estimation of $\mu(S)$ under convexity. In \cite{cue12} and \cite{ari19} the analogous plug-in estimation of $L(S)$ and $\mu(S)$ under the wider assumption of $r$-convexity is considered.   In this case, the plug-in estimators of  $L(S)$ and $\mu(S)$ would be $L(S_n)$ and $\mu(S_n)$, $S_n$ being the $r$-convex hull of the sample 
	\item[B)]  \textit{Methods based on two-samples.} In some cases, one may assume that one has two samples, one inside and the other outside $S$. This extra information might allow for estimators of $L(S)$ essentially based on nearest neighbors ideas; see, for example,  \cite{cue07} and \cite{jim11}.

	\item[C)]  \textit{Indirect methods,} based on auxiliary functions or formulas involving the surface area. This is the case of \cite{cuepat18} or \cite{aar22}. Also the results on the asymptotic distribution of the Hausdorff distance between a random sample and its support provided by \cite{pen23} are of potential interest in this regard.
\end{itemize}
The present paper fits in the item C) of this list. More specifically we follow the lines of \cite{cuepat18}. However, whereas in that paper the interval of validity $[0, \textbf{R}]$ of the polynomial assumption is assumed to be known, we consider here the non-trivial problem of estimating such interval. The motivation for this is to use the polynomial character of $V(t)$ in that interval to estimate the polynomial coefficients which, as commented above, have a relevant geometric interest. This can be seen as a sort of ``algebraic counterpart'' of the estimation of Federer's reach parameter defined above. Nevertheless, it is important to note that the assumption of positive reach (which is very relevant in many other aspects) is not needed or used here at all. Of course, if we assume that $S$ has a positive reach ${\mathbf r}>0$, the polynomial volume assumption would be ensured in the interval $[0,{\mathbf r}]$. Therefore, any method to estimate Federer's reach ${\mathbf r}$ (see, e.g., \cite{chola22}) would be useful  to exploit the polynomial volume assumption.  The point is that there are many extremely simple sets for which Federer's reach is 0 and, still, polynomial volume assumption does hold. These include a ``pacman-type'' set such as the closed unit disk in ${\mathbb R}^2$ excluding an open sector, the union of two disjoint squares, or a simple set such as $[-1,1]^2\setminus (-\frac{1}{2},\frac{1}{2})^2$. In all these cases, and in many others, the following definition applies 
\begin{definition}\label{def:pvol}
	A compact set $S\subset{\mathbb R}^d$ is said to fulfil the polynomial volume property if there exist  constants  $\theta_0,\ldots,\theta_d\in{\mathbb R}$ and $R>0$ such that
	\begin{equation} 
		V(r)=\theta_0+\theta_1r+\ldots+\theta_dr^d,\ \mbox{for all } r\in[0,R].\label{pvol}
	\end{equation}	
	
\end{definition}

When this condition holds, a natural strategy to follow is to estimate $V(r)$ by its empirical counterpart $V_n(r)$, as defined in the previous section and, in turn,  to approximate $V_n(r)$ by a polynomial of degree $d$, whose coefficients $\theta_{0n},$ and $\theta_{1n}$ can be seen as estimators of $\mu(S)$ and $L(S)$, respectively. 

The above definition leads in a natural way to the following notion of polynomial reach, which is the central concept of the present work.

\begin{definition}
	Given a compact set $S\in {\mathbb R}^d$ with volume function $V$, we will define the \textit{polynomial reach} ${\mathbf R}$ of $S$ as 
	\begin{equation}\label{pol_reach}
		{\mathbf R}=\sup\{r\geq 0:\ V \mbox{ is a polynomial of degree at most }\ d \mbox{ on }\ [0,r]\}.
	\end{equation}
\end{definition}

 When the set \( S \) possesses positive reach (as defined above in the Section \ref{sec:intro}), it is established in \cite{fed59} that in \eqref{pvol}, \( \theta_0 \) equals \( \mu_d(S) \), \( \theta_1 \) is \( L(S) \), \( \theta_2 \) represents the integrated mean curvature, and \( \theta_d \) is \( \omega_d\chi(S) \), where \( \chi(S) \) denotes the Euler-Poincaré characteristic of \( S \).  These geometric interpretations for the polynomial coefficients still hold true for $\theta_0$ and $\theta_1$ when we just assume positive polynomial reach. However, they do not necessarily apply for the other coefficients related to curvatures and Euler's characteristic. 
On the other hand, the relation between Federer's reach $\mathbf r$ and polynomial reach $\mathbf R$  is not straightforward. Of course $\mathbf r\leq \mathbf R$, but it is not obvious when $\mathbf r=\mathbf R$; this is the case, for example, when $S$ is te union of two points. 
See \cite{berr} for additional details and examples on the positive polynomial reach condition.

\section{Consistent estimation of the polynomial reach}

The aim of this section and in fact, the main theoretical contribution of this work, is to show that, under some conditions, one can obtain either a consistent estimator (subsection \ref{subsec:consistent})
or, asymptotically, a lower bound (subsection \ref{subsec:algorithm})  of the polynomial reach ${\mathbf R}$. Note that, somewhat paradoxically, a lower bound might be even preferable and ``safer'' since, in order to use the polynomial volume assumption to estimate the relevant geometric quantities (area, perimeter,...) all we need is an interval where the polynomial expression does hold. Thus, we do not need in fact the whole interval $[0,{\mathbf R}]$: we may more easily afford some underestimation of ${\mathbf R}$ rather than an error by excess in the estimation of this quantity.

Let us start with two technical lemmas with some independent interest, whose proofs are given in Appendix A.

\begin{lemma} \label{lemaux2} Let $S\subset\mathbb{R}^d$ be a compact set such that   $V(0)>0$ and the right-hand side derivative $V^+(0)$
	does exists and is finite.  Let $\aleph_n=\{X_1,\ldots,X_n\}\subset S$ be  a sequence of finite sets  such that $\gamma_n=d_H(\aleph_n,S)\to 0$ and $2\gamma_n<b<+\infty$. Then,  for all $n$ large enough,  
\begin{equation}\label{convunif}
\sup_{s\in [2\gamma_n,b]}|V(s)-V_n(s)| \leq C\max_{p\in \partial S}d(p,\aleph_n), \mbox{ where } C=\mbox{\rm ess sup}_{s\in [0,b]} |V'(s)|,
\end{equation}
 and 
\begin{align}\label{bound0gamman}
\Vert V-V_n\Vert_{L^2([0,2\gamma_n])}\leq   2V(0)(2\gamma_n)^{1/2}.
\end{align}
As a consequence,  given the compact interval $I=[0,b]$ we have that, for all $n$ large enough,
\begin{equation}\label{2gamman}
\Vert V-V_n\Vert_{L^2(I)}\leq 2\sqrt{2}V(0)\gamma_n^{1/2}+\sqrt{b}C\max_{p\in \partial S}d(p,\aleph_n).
\end{equation}

\end{lemma}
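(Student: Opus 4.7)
The plan is to split the interval $I=[0,b]$ at the point $2\gamma_n$ and to treat the two pieces with very different arguments: a sharp ``shell'' estimate on $[2\gamma_n,b]$ and a crude total-volume estimate on the short interval $[0,2\gamma_n]$. The bound \eqref{2gamman} will then follow from \eqref{convunif} and \eqref{bound0gamman} by applying the triangle inequality to $L^2$ norms on the two disjoint sub-intervals.

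For \eqref{convunif}, the starting observation is that $\aleph_n\subset S$, so $B(\aleph_n,s)\subset B(S,s)$ and $0\le V(s)-V_n(s)=\mu\bigl(B(S,s)\setminus B(\aleph_n,s)\bigr)$. Writing $\tilde{\gamma}_n:=\max_{p\in\partial S}d(p,\aleph_n)$, I aim to establish the set inclusion
\[
B(S,s)\setminus B(\aleph_n,s)\subset\{y\in\mathbb{R}^d:\ s-\tilde{\gamma}_n<d(y,S)\le s\}
\]
for every $s\ge 2\gamma_n$. Given $y$ on the left-hand side, the case $y\in S$ is ruled out because $d(y,\aleph_n)\le\gamma_n<2\gamma_n\le s$ (note that $\gamma_n>0$, since $\aleph_n$ is finite and $V(0)>0$ forces $S$ to be infinite), while for $y\notin S$ the closest point $p\in S$ lies on $\partial S$, so choosing $p'\in\aleph_n$ with $\|p-p'\|\le\tilde{\gamma}_n$ yields $s<d(y,\aleph_n)\le d(y,S)+\tilde{\gamma}_n$ by the triangle inequality. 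The Lebesgue measure of the shell on the right-hand side equals $V(s)-V(s-\tilde{\gamma}_n)$, and assuming absolute continuity of $V$ on $[0,b]$ this is bounded by $\int_{s-\tilde{\gamma}_n}^{s}V'(u)\,du\le C\tilde{\gamma}_n$, which is \eqref{convunif}.

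The bound \eqref{bound0gamman} is much cheaper: on $[0,2\gamma_n]$ I simply estimate $0\le V(s)-V_n(s)\le V(s)\le V(2\gamma_n)$, and the hypothesis that $V^{+}(0)$ exists and is finite ensures that $V$ is continuous at $0$, so that $V(2\gamma_n)\to V(0)$ and in particular $V(2\gamma_n)\le 2V(0)$ for all $n$ large enough. Integrating this pointwise bound over $[0,2\gamma_n]$ gives $\|V-V_n\|_{L^2([0,2\gamma_n])}\le V(2\gamma_n)\sqrt{2\gamma_n}\le 2V(0)(2\gamma_n)^{1/2}$. The $L^2$-bound \eqref{2gamman} then follows by combining the two estimates on $[0,2\gamma_n]$ and $[2\gamma_n,b]$ via $\|V-V_n\|_{L^2(I)}\le\|V-V_n\|_{L^2([0,2\gamma_n])}+\|V-V_n\|_{L^2([2\gamma_n,b])}$, the second summand being bounded by $\sqrt{b-2\gamma_n}\,C\max_{p\in\partial S}d(p,\aleph_n)\le\sqrt{b}\,C\max_{p\in\partial S}d(p,\aleph_n)$.

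The main delicate step I anticipate is the shell inclusion behind \eqref{convunif}: it is precisely the dichotomy between $y\in S$ (eliminated thanks to $s\ge 2\gamma_n$) and $y\notin S$ (whose metric projection lands on $\partial S$) that lets the refined quantity $\tilde{\gamma}_n$ appear in place of the coarser $\gamma_n$. A secondary subtlety is the absolute continuity of $V$ on $[0,b]$, which is needed to apply the fundamental theorem of calculus and turn the shell inequality into the clean bound $C\tilde{\gamma}_n$; this is standard for volume functions of compact sets but should be invoked explicitly.
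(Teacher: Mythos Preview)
Your proposal is correct and follows essentially the same route as the paper: the shell inclusion (which the paper states in the equivalent sandwich form $B(S,s-\tilde\gamma_n)\subset B(\aleph_n,s)\subset B(S,s)$), the same dichotomy $y\in S$ versus $y\notin S$ using $s\ge 2\gamma_n$, the crude bound $V(2\gamma_n)\le 2V(0)$ on the short interval, and the final splitting of the $L^2$ norm. The one place where the paper does more than you is the ``secondary subtlety'' you flagged: rather than merely invoking absolute continuity of $V$, the paper argues that $V$ is Lipschitz on $[0,b]$ by combining the identity $V'(t)=\mathcal{H}^{d-1}(\partial B(S,t))$ and the inequality $\mathcal{H}^{d-1}(\partial B(S,t))\le d(V(t)-V(0))/t$ from Rataj--Winter (2010) (so finiteness of $V^{+}(0)$ controls $V'$ near $0$) with Stach\'o's (1976) result that $V'(t)=t^{d-1}\alpha(t)$ a.e.\ for a decreasing $\alpha$ (which controls $V'$ away from $0$).
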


\begin{lemma}\label{rates1} Let $\aleph_n=\{X_1,\ldots,X_n\}$ be an iid sample on a compact set $S$, that comes from a distribution $P_X$ standard with respect to Lebesgue measure.  Assume further that  $S$ fulfils $L_0(S)<\infty$.
	
	Then with probability one,  
	\begin{equation} \label{bounddhborde}
		\limsup_{n\to \infty }  \Big(\frac{n}{\log n}\Big)^{1/d}\max_{p\in \partial S}d(p,\aleph_n)\leq \Big(\frac{2}{\delta\omega_d}\Big)^{1/d},
	\end{equation}
	being $\delta$ the standardness constant of $P_X$.
\end{lemma}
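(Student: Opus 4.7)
The plan is to combine a deterministic covering of $\partial S$ (whose cardinality is controlled through the hypothesis $L_0(S)<\infty$) with a standardness-based lower bound on the probability that each covering ball contains a sample point, and then apply the first Borel--Cantelli lemma. Define the candidate rate $\eps_n = \bigl(2\log n / (\delta\omega_d n)\bigr)^{1/d}$, so that $n\delta\omega_d\eps_n^d = 2\log n$. For an arbitrary $\eta>0$, the goal is to show that
\[
E_n=\Bigl\{\max_{p\in \partial S}d(p,\aleph_n)>(1+\eta)\eps_n\Bigr\}
\]
occurs only finitely often almost surely; the conclusion then follows by sending $\eta\to 0$ along a rational sequence.

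First I would discretize $\partial S$ at the right scale. Take $\{p_1,\dots,p_{N_n}\}\subset \partial S$ to be a maximal $\eta\eps_n$-separated subset; maximality makes it an $\eta\eps_n$-net of $\partial S$, while the half-radius balls $B(p_i,\eta\eps_n/2)$ are pairwise disjoint and contained in $B(\partial S,\eta\eps_n/2)$. Because $L_0(S)<\infty$, for all $n$ large enough one has $\mu(B(\partial S,\rho))\leq 2\rho(L_0(S)+1)$ with $\rho=\eta\eps_n/2$, and comparing volumes yields
\[
N_n\,\omega_d (\eta\eps_n/2)^d \leq \mu(B(\partial S,\eta\eps_n/2)) \leq \eta\eps_n (L_0(S)+1),
\]
so $N_n\leq C(\eta)\,\eps_n^{-(d-1)}$.

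Next I would transfer $E_n$ to the net. If some $p^*\in\partial S$ has $B(p^*,(1+\eta)\eps_n)\cap \aleph_n=\emptyset$, there is $p_i$ with $\Vert p_i-p^*\Vert\leq \eta\eps_n$, so $B(p_i,\eps_n)\subset B(p^*,(1+\eta)\eps_n)$ is also disjoint from $\aleph_n$. Since $p_i\in\partial S\subset S$ and $\eps_n\to 0$, the standardness inequality \eqref{estandar} gives
\[
P_X\bigl(B(p_i,\eps_n)\bigr)\geq \delta\omega_d\eps_n^d=\tfrac{2\log n}{n},
\]
hence $\pr\bigl(B(p_i,\eps_n)\cap \aleph_n=\emptyset\bigr)\leq (1-2\log n/n)^n\leq n^{-2}$. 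A union bound then gives $\pr(E_n)\leq C(\eta)\eps_n^{-(d-1)}n^{-2} = O\bigl(n^{-1-1/d}(\log n)^{-(d-1)/d}\bigr)$, which is summable, so by Borel--Cantelli the event $E_n$ occurs only finitely often almost surely. On the complement one has $(n/\log n)^{1/d}\max_{p\in\partial S}d(p,\aleph_n)\leq (1+\eta)(2/(\delta\omega_d))^{1/d}$ for all $n$ large, and letting $\eta\downarrow 0$ along a rational sequence yields \eqref{bounddhborde}.

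The one substantive step, and the only place where real care is needed, is the Minkowski-content bound on $N_n$: one has to make sure that the finite limit $L_0(S)$ genuinely controls the $(d-1)$-dimensional content of a shrinking tubular neighborhood of $\partial S$ at the exact scale dictated by the rate $\eps_n$. Everything else reduces to the coupon-collector estimate $(1-c\log n/n)^n\leq n^{-c}$ combined with a routine union bound, and the rest of the proof is mechanical once $\eps_n$ is chosen so that the exponent on $n^{-c}$ dominates the polynomial factor $N_n$.
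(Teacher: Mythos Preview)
Your proof is correct and follows essentially the same strategy as the paper's: a packing/covering of $\partial S$ whose cardinality is controlled via the Minkowski-content hypothesis $L_0(S)<\infty$, a standardness lower bound for the mass of each ball, a union bound giving a summable probability, and Borel--Cantelli. The only cosmetic difference is that you introduce an auxiliary slack parameter $\eta>0$ (and then let $\eta\downarrow 0$) to make the triangle-inequality transfer from an arbitrary boundary point to the net cleaner, whereas the paper works directly with a $3\eps$-covering and the associated $\eps$-packing.
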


\subsection{A consistent estimator}\label{subsec:consistent}

The main focus of this subsection is to show
in Proposition \ref{prop:consist}
that the polynomial reach can be estimated consistently from a sample. While this result has some conceptual interest, it suffers from some practical limitations: on the one hand, the estimator we propose might have (as suggested by some numerical experiments) a rather poor accuracy except for very large sample sizes. In the second place it could provide in some cases an overestimation which could entail some practical drawbacks commented at the beginning of the following subsection. 

Let us denote by simplicity $P_n^{t}:=P_{n,d}^{[0,t]}$, $P^t:=P_d^{[0,t]}$ the best polynomial approximation (in $L^2$)  of degree at most $d$ for $V_n$ and $V$, respectively, on $[0,t]$. Define
$$
G_n(t)=\Vert V_n-P_n^t\Vert_t=\Vert V_n-P_n^t\Vert_{L^2[0,t]},\ \text{ and }\  G(t)=\Vert V-P^t\Vert_t:=\Vert V-P^t\Vert_{L^2[0,t]}.
$$

Note that $G_n(t)$ is the polynomial approximation error for the estimate volume $V_n$ and $G(t)$ is the polynomial approximation error for the true volume $V$. Then $G(t)=0$ whenever $t\leq \mathbf{R}$.  Also, note that  $\mathbf{R}=\lim_{\varepsilon\to 0} \inf\{t:G(t)>\varepsilon\}$.

\begin{theorem}\label{prop:consist}
	Let $S\subset \mathbb{R}^d$  be compact  and $\aleph_n=\{X_1,\dots,X_n\}\subset S $ a sequence of finite sets such that $\gamma_n:=d_H(\aleph_n,S)\to 0$.
	Take a sequence $\varepsilon_n>0$ such that  $\varepsilon_n\to 0$ and 
	$$
	\gamma_n^{1/2}=o(\varepsilon_n),\ \mbox{ as } n\to\infty, $$
	then $\tilde{\mathbf{R}}=G_n^{-1}(\varepsilon_n)=\inf\{t: G_n(t)>\varepsilon_n\}$ fulfils
	\begin{equation} \label{limth}
		\tilde{\mathbf{R}}\to {\mathbf R}.
	\end{equation}
	\begin{proof}

		Let us denote $\pi^t$ the orthogonal $L^2$ projection onto the  space of polinomials of degree at most $d$ on $[0,t]$. Let $b>0$, and $n$ large enough such that $2\gamma_n<b$. Then, using the contractive property for the projections on convex sets in Hilbert spaces, we have 
		\begin{multline*}
			\sup_{0<2\gamma_n<t<b}\Vert P_n^t-P^t\Vert_t=\sup_{0<2\gamma_n<t<b}\Vert\pi^t(V_n)-\pi^t(V)\Vert_t\leq \sup_{0<2\gamma_n<t<b}\Vert V_n-V\Vert_t \\
			\leq \Vert V_n-V\Vert_{2\gamma_n}+	 \sqrt{(b-2\gamma_n)}\sup_{0<2\gamma_n<t<b}\sup_{s\in [2\gamma_n,t]} |V_n(s)-V(s)|.
		\end{multline*}
		From \eqref{bound0gamman} $\Vert V_n-V\Vert_{2\gamma_n}\leq  2V(0)(2\gamma_n)^{1/2}$.
		Using  \eqref{convunif} and $\max_{p\in \partial S}d(p,\aleph_n)\leq \gamma_n \to 0$, it follows that
		$$\sqrt{(b-2\gamma_n)}\sup_{0<2\gamma_n<t<b}\sup_{s\in [2\gamma_n,t]} |V_n(s)-V(s)|\to 0$$
		Then  $\sup_{0<2\gamma_n<t<b}\Vert P_n^t-P^t\Vert_t\to 0$ as $n\to \infty$. 
		From the triangular inequality  we have that, with probability one,
		
		\begin{equation} \label{cota1}
			\sup_{0<2\gamma_n<t<b} \Big|\Vert V_n-P_n^t\Vert_t- \Vert V-P^t\Vert_t\Big| \leq \sup_{0<2\gamma_n<t<b} \left( \Vert V_n-V\Vert_t+\Vert P^t-P_n^t\Vert_t \right) =O(\gamma_n^{1/2}),
		\end{equation}
		\noindent
		as $n\to \infty.$
		This proves that   for all $2\gamma_n<b<+\infty$,
		$$\sup_{s\in [2\gamma_n,b]}  |G_n(s)-G(s)|\to 0\quad \text{ as }n\to \infty.$$
		Let us first assume  that  $ {\mathbf R}>0$, thus $G(t)=\Vert V-P^t\Vert_t=0$ for al $0\leq t\leq   {\mathbf R}$. Then from \eqref{cota1}
		\begin{equation}\label{cota00}
			\sup_{s\in [2\gamma_n,   {\mathbf R}]} G_n(s)<\varepsilon_n.
		\end{equation}
		From \eqref{bound0gamman}  $G_n(t)<2V(0)(2\gamma_n)^{1/2}$ for all $t\in [0,2\gamma_n]$. 
		This together with \eqref{cota00} proves that  for $n$ large enough,
		\begin{equation}\label{cota2}
			\tilde{\mathbf{R}}=	\inf\{t:G_n(t)>\varepsilon_n\}\geq  {\mathbf R}.
		\end{equation}
		Observe that if   $ {\mathbf R}=+\infty$  \eqref{cota2} proves that  $\tilde{\mathbf{R}} = +\infty$. 
		Assume now that $0\leq  {\mathbf R}<\infty$,  then  $G(t)>0$ for all $t> {\mathbf R}$. Let us fix $\delta> {\mathbf R}$ and $n$ large enough to ensure $\varepsilon_n<G(\delta)$.   $G_n(\delta)\to G(\delta)$ as $n\to \infty$, then   for all $n$ large enough,
		$$\tilde{\mathbf{R}}=\inf\{t:G_n(t)>\varepsilon_n\}\leq  \delta.$$
		Since this holds for all $\delta> {\mathbf R}$ it follows  that  
		\begin{equation}\label{cota4}
	 \lim_{n\to \infty} \tilde{\mathbf{R}}\leq {\mathbf R},\ \mbox { a.s.}
		\end{equation}
		This in particular proves that if $ {\mathbf R}=0$, $\tilde{\mathbf{R}}\to 0$  as $n\to \infty$.  From \eqref{cota2} and \eqref{cota4} it follows \eqref{limth}.
	\end{proof}
\end{theorem}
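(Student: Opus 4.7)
The plan is to establish $\tilde{\mathbf{R}}\to\mathbf{R}$ by sandwiching the estimator between a lower and an upper asymptotic bound. First I would show that $\liminf_n \tilde{\mathbf{R}}\geq \mathbf{R}$ by exploiting that $G\equiv 0$ on $[0,\mathbf{R}]$; second I would show $\limsup_n \tilde{\mathbf{R}}\leq \mathbf{R}$ by using that $G(\delta)>0$ for every $\delta>\mathbf{R}$, together with pointwise convergence $G_n(\delta)\to G(\delta)$. The degenerate cases $\mathbf{R}=0$ and $\mathbf{R}=\infty$ are then absorbed cleanly into these two inequalities.

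The key technical ingredient would be a uniform-in-$t$ control on $|G_n(t)-G(t)|$ of order $\gamma_n^{1/2}$. Writing $\pi^t$ for the orthogonal projection onto $\Pi_d([0,t])$ and splitting the interval of integration as $[0,2\gamma_n]\cup [2\gamma_n,t]$, I would bound $\|V_n-V\|_{L^2[0,t]}$ using both parts of Lemma \ref{lemaux2}: the crude bound $2V(0)(2\gamma_n)^{1/2}$ on the short initial subinterval (where $V_n(0)=0$ while $V(0)=\mu(S)$ may be large), and the sup-norm estimate via $C\max_{p\in\partial S}d(p,\aleph_n)$ on the remainder. Non-expansiveness of the projection then yields $\|P_n^t-P^t\|_t\leq \|V_n-V\|_t$, so by the triangle inequality $|G_n(t)-G(t)|\leq 2\|V_n-V\|_t=O(\gamma_n^{1/2})$ uniformly on any fixed interval $[2\gamma_n,b]$.

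With this uniform bound in hand the two inequalities follow cleanly. On $[0,\mathbf{R}]$ one has $P^t=V$ and hence $G(t)=0$, whence $G_n(t)\leq \|V_n-V\|_t=O(\gamma_n^{1/2})=o(\varepsilon_n)$, which gives $\tilde{\mathbf{R}}\geq \mathbf{R}$ eventually. Conversely, for any fixed $\delta>\mathbf{R}$, closedness of $\Pi_d([0,\delta])$ in $L^2([0,\delta])$ together with the fact that $V$ is not a polynomial of degree $\leq d$ on $[0,\delta]$ forces $G(\delta)>0$; the pointwise convergence $G_n(\delta)\to G(\delta)$ then makes $G_n(\delta)>\varepsilon_n$ eventually, so $\tilde{\mathbf{R}}\leq \delta$, and letting $\delta\downarrow\mathbf{R}$ concludes the argument. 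The main conceptual obstacle is that the target subspace $\Pi_d([0,t])$ varies with the endpoint $t$, which would normally complicate any explicit comparison of best approximants; the resolution is that contractivity of $\pi^t$ is already stated on its own interval, which bypasses any need to track the $t$-dependence of $P^t$ and $P_n^t$ explicitly.
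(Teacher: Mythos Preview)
Your proposal is correct and follows essentially the same route as the paper's own proof: contractivity of the projection $\pi^t$ combined with the two-piece Lemma~\ref{lemaux2} bound to get $|G_n(t)-G(t)|=O(\gamma_n^{1/2})$ uniformly on $[2\gamma_n,b]$, then the sandwich $\liminf\tilde{\mathbf R}\geq\mathbf R$ from $G\equiv 0$ on $[0,\mathbf R]$ and $\limsup\tilde{\mathbf R}\leq\mathbf R$ from $G(\delta)>0$ for $\delta>\mathbf R$. Your lower-bound step is in fact slightly cleaner than the paper's, since observing directly that $G_n(t)\leq\|V_n-V\|_t$ whenever $V\in\Pi_d([0,t])$ handles all $t\in[0,\mathbf R]$ at once without separating out $[0,2\gamma_n]$.
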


\subsection{An algorithm for a lower bound of  the polynomial reach}\label{subsec:algorithm}

Let us note that a consistent (in the statistical, asymptotic sense) estimator of ${\mathbf R}$, as that proposed in the previous subsection, might perhaps provide, for finite sample sizes, an over estimation of the polynomial reach ${\mathbf R}$.
Indeed, the convergence $\tilde {\mathbf R}\to {\mathbf R}$ does not exclude at all the possibility that $\tilde {\mathbf R}> {\mathbf R}$ for infinitely many values of $n$. 
 This might be particularly harmful in practice since, in  the over estimation case, the polynomial volume condition for $V$ is not fulfilled on $[0,\widehat {\mathbf R}]$ which might lead to poor estimation of the polynomial coefficients. From this point of view it is safer to look for a infra-estimation of ${\mathbf R}$. This is the purpose of the estimation algorithm we propose in  this subsection.

This algorithm (whose convergence will be proved below) works as follows

\begin{itemize}
	\item[] \textit{Inputs and notation.} A finite set $\aleph_n=\{X_1,\ldots,X_n\}\subset S$. An arbitrarily  small, $\eta>0$.  A grid of $K$ values $0<r_1<r_2<\dots<r_K$.  A sequence $U_n=(\log n/n)^{1/2d-\eta}$.  A positive integer value $\ell\geq d$ to be used as the degree of the approximating polynomials. 
	
	Let us define $I_i=[0,r_{i}]$   and $J_i=[r_i,r_K]$ for all $i=1,\dots,K-1$, let $P_{n,d}^{I_i}$ (resp. $P_{n,\ell}^{J_i})$ be the best polynomial approximation of $V_n$ of degree $d$ on the interval $I_i$ (resp. of degree $\ell\geq d$ on $J_i$).

	\item[] \textit{Step 0}.  Put $i=0$. If $\Vert V_n-P_{n,d}^{I_1}\Vert_{L^2(I_1)}>U_n$ then the output is $\widehat{\mathbf R}=0$ and the algorithm stops. Otherwise, put $i\leftarrow i+1$ and  go to the following step. 
	
	\textit{Step 1}  Define for  $i=1,\ldots,K-1$
	
	$$c_i=\frac{\Vert V_n-P_{n,d}^{I_{i}}\Vert_{L^2(I_{i})}}{\Vert V_n-P_{n,\ell}^{J_i}\Vert_{L^2(J_i)}}.$$
	
	\textit{Output}.
If the algorithm does not stop at Step 0, let $i$ be the first index such that $c_{i}>1$ The output of the algorithm is $\widehat {\mathbf R}=r_{i-1}$.  If there is no such $i$, we just define the output of the algorithm as $\widehat {\mathbf R}=r_{K-1}$.
	
\end{itemize}


This algorithm is studied in the following result which relies on two assumptions, denoted $H1$ and $H2$. In Theorem \ref{th.las.constantes} below we will show precise conditions under which $H1$ and $H2$ hold when $\aleph_n$ comes from a random sample whose support is $S$.
\begin{theorem}\label{th:consist}
	Let ${\mathbf R}$ be the polynomial reach of the compact set $S\subset {\mathbb R}^d$. Let us consider the above algorithm,  based on a grid $0=r_0<r_1<r_2<\dots<r_K$ when applied to any sequence of sets $\aleph_n=\{X_1,\dots,X_n\}\subset S$  such that $d_H(S,\aleph_n)\to 0$.
Assume 
	\begin{itemize}
		\item[\mbox{H1}.] For some $\eta>0$ we have  $\Vert V_n-P_{n}^{I_i}\Vert_{L^2(I_i)}<U_n:= (\log n/n)^{\frac{1}{2d}-\eta}$ for all $i>0$ such that $I_i\subset[0,\mathbf{R}]$,  for all $n$ large enough. 
		\item[\mbox{H2}.] For all $\varepsilon>0$, there exists a degree $\ell=\ell(\varepsilon)$, such that $\Vert V_n-P_{n,\ell}^{J_i}\Vert_{L^2(J_i)}<\varepsilon$, for all $i=1,\dots,K-1$, for all $n$ large enough. 
		\end{itemize}

\noindent Then, there exists $\ell$ such that  the output $\widehat{\mathbf R}$ of the algorithm fulfils
 	
 	\begin{itemize}
  \item[(i)] If $0\leq {\mathbf R}<r_1$ then $\widehat{\mathbf R}=0$ for all $n$ large enough.		If  ${\mathbf R}\geq r_K$ then $ r_1\leq \widehat {\mathbf R}\leq r_{K-1}$ for all $n$ large enough.
 	 \item[(ii)] 
 		 
 			If $r_{i_0}<\mathbf{R}<r_{i_0+1}$ for some $0<i_0<K$ then $\widehat{\mathbf R}=r_{i_0}$ for all $n$ large enough.
 			\item[(iii)]  If $\mathbf{R}=r_{i_0}$ for some $1\leq i_0 < K$ then  $\widehat {\mathbf R}\leq r_{i_0}$, moreover
 			if $\mathbf{R}=r_{i_0}>r_1$, then  $\widehat {\mathbf R}\in \{r_{i_0-1},r_{i_0}\}$, for all $n$  large enough.
 		\end{itemize}

	\begin{proof}
 (i)  Assume first that $0\leq {\mathbf R}<r_1$, let us prove that the algorithm stops in step 0. From the definition of ${\mathbf R}$,  $V$ is not a polynomial on the closed interval $I_1$. Then,   
		\begin{equation}
			\lim_n\Vert P_{n,d}^{I_1}-V_n\Vert_{L^2(I_1)}=\Vert P_d^{I_1}-V\Vert_{L^2(I_1)}>0\label{paso_0}.
		\end{equation}
		Indeed, $\Vert P_{n,d}^{I_1}-P_d^{I_1}\Vert_{L^2(I_1)}\rightarrow 0$, as a consequence of the continuity of the projections in a Hilbert space.  On the other hand, we must have $\Vert P_d^{I_1}-V\Vert_{L^2(I_1)}>0$. To prove this observe that  $V$ is polynomial of degree $d$ on $[0,\mathbf{R}]$ and not on $[0,r_1]$. So we must have $\Vert P_d^{I_1}-V\Vert_{L^2(I_1)}>0$. 
	    Let $n$ large enough to guarantee that $U_n<\Vert P_d^{I_1}-V\Vert_{L^2(I_1)}/2$.
    	We conclude \eqref{paso_0} and, in particular, that,   for all $n$ large enough,
		$$
	    \Vert P_{n,d}^{I_1}-V_n\Vert_{L^2(I_1)}>U_n.
		$$
		 So, the algorithm will stop at step 0 for all $n$ large enough, when $0\leq{\mathbf R}<r_1$ and then $0=\widehat{\mathbf R}\leq {\mathbf R}$ for all $n$ large enough as desired. 
		 
Now, if ${\mathbf R}>r_K$  assumption H1 entails that,  for all $n$ large enough,  the algorithm does not stop at the initial step. So, necessarily $r_1\leq \hat {\mathbf R}\leq r_{K-1}$. 
		
Before proving  (ii), assume that $r_{i_0}\leq {\mathbf R}<r_{i_0+1}$ for some $i_0\in\{1,\ldots, K-2\}$ and let us first prove that there exists $\ell$ large enough such that  $c_{i_0+1}>1$   for all $n$ large enough. Indeed, reasoning as we did with $I_1$, we conclude that for all $n$ large enough,  $\Vert P_{n,d}^{I_{i_0+1}}-V_n\Vert_{L^2(I_{i_0+1})}>\Vert P_d^{I_{i_0+1}}-V\Vert_{L^2(I_{i_0+1})}/2>0$. Thus,

\begin{align*}
	c_{i_0+1}&=\frac{\Vert V_n-P_{n,d}^{I_{i_0+1}}\Vert_{L^2(I_{i_0+1})}}{\Vert V_n-P_{n,\ell}^{J_{i_0+1}}\Vert_{L^2(J_{i_0+1})}} \geq 
	\frac{\frac{1}{2}\Vert P_{d}^{I_{i_0+1}}-V\Vert_{L^2(I_{i_0+1})}}{\Vert V_n-P_{n,\ell}^{J_{i_0+1}}\Vert_{L^2(J_{i_0+1})} }.
\end{align*}
 
By assumption H2 we can take $\ell$  such that  for all $n$ large enough,
\begin{equation}\label{ele}
 \Vert V_n-P_{n,\ell}^{J_{i_0+1}}\Vert_{L^2(J_{i_0+1})}<\frac{1}{2}\Vert P_{d}^{I_{i_0+1}}-V\Vert_{L^2(I_{i_0+1})} 
\end{equation}
 so that $c_{i_0+1}>1$. This in particular proves that if $\mathbf{R}=r_{i_0}$ for some $0<i_0 <K-1$ then  $\widehat {\mathbf R}\leq r_{i_0}$.
 
 Let us now prove $(ii)$. So, assume that $r_{i_0}< {\mathbf R}<r_{i_0+1}$ for $i_0\in \{1,\dots,K-1\}$. Take $\ell$ fixed but large enough to guarantee that \eqref{ele} holds. Let us prove that $c_i<1$ for all $1\leq i\leq i_0$. First observe that
\begin{equation}\label{lim1}
	\lim_{n\to \infty}\Vert V_n-P_{n,\ell}^{[r_{i_0},r_{i_0+1}]}\Vert_{L^2({[r_{i_0},r_{i_0+1}]})}=\Vert V-P_{\ell}^{[r_{i_0},r_{i_0+1}]}\Vert_{L^2([r_{i_0},r_{i_0+1}])}\quad a.s.
	\end{equation}
Let us prove that $V$ is not a polynomial of degree at most $\ell$ in $[r_{i_0},r_{i_0+1}]$.
Assume by contradiction that  $V(t)=\sum_{i=0}^\ell a_it^i$ for some $a_0,\dots,a_\ell$ and $t\in [r_{i_0},r_{i_0+1}]$. Since  $r_{i_0}<{\mathbf R}<r_{i_0+1}$  $V$ is a polynomial of degree at most $d$ in $[r_{i_0}, \mathbf{R}]$ and then $a_{d+1}=\dots = a_\ell =0$. But then $V$ is a polynomial of degree at most $d$ in $[0,r_{i_0+1}]$ from where it follows that $\mathbf{R}\geq r_{i_0+1}$ which is a contradiction.
Since $V$ is not a polynomial of degree at most $\ell$ in $[r_{i_0},r_{i_0+1}]$
$$\Vert V-P_{\ell}^{[r_{i_0},r_{i_0+1}]}\Vert_{L^2([r_{i_0},r_{i_0+1}])}>0.$$
  From \eqref{lim1} it follows that  for all $n$ large enough, 

$$\Vert V_n-P_{n,\ell}^{[r_{i_0},r_{i_0+1}]}\Vert_{L^2({[r_{i_0},r_{i_0+1}]})}>\frac{1}{2}\Vert V-P_{\ell}^{[r_{i_0},r_{i_0+1}]}\Vert_{L^2([r_{i_0},r_{i_0+1}])}>0$$
but for all $i\leq i_0$,
\begin{equation}\label{cimenorque1}
\Vert V_n-P_{n,\ell}^{J_{i}}\Vert_{L^2(J_{i})} \geq \Vert V_n-P_{n,\ell}^{[r_{i_0},r_{i_0+1}]}\Vert_{L^2({[r_{i_0},r_{i_0+1}]})}>\frac{1}{2}\Vert V-P_{\ell}^{[r_{i_0},r_{i_0}+1]}\Vert_{L^2([r_{i_0},r_{i_0+1}])}>0.
\end{equation}
Since 
$${\Vert V_n-P_{n,d}^{I_{i_0}}\Vert_{L^2(I_{i_0})}}\to 0$$
it follows from \eqref{cimenorque1} that $c_i\to 0$ for all $1\leq i\leq i_0$ because the right-hand side of \eqref{cimenorque1} does not depend on $n$ and $\ell$ is fixed, in particular $c_i<1$ for all $i\leq i_0< K$.  This in particular proves that if $r_{K-1}<\mathbf{R}<r_{K}$ then $\hat{\mathbf{R}}=r_{K-1}$. To conclude the proof of (ii) let us consider the case $r_{i_0}< {\mathbf R}<r_{i_0+1}$ for $i_0<K-1$; we have proved above that, in this situation we have $c_{i_0+1}>1$ for $i_{0}<K-1$, eventually. 
 Note that, in this case, the value $\widehat {\mathbf R}$ is eventually constant, $\widehat {\mathbf R}=r_{i_0}$. 

 Finally, to prove (iii) we must see that 	if $\mathbf{R}=r_{i_0}>r_1$, then  $\widehat {\mathbf R}\in \{r_{i_0-1},r_{i_0}\}$, eventually  almost surely.  As we have already proved  
 $\widehat {\mathbf R}\leq r_{i_0}$ for all $n$ large enough, it suffices to prove that $c_{i_0-1}<1$ (since, reasoning as in \eqref{cimenorque1}, we conclude $c_i<1$ for all $i<i_0-1$). First observe that, for all $\ell>0$ fixed,  for all $n$ large enough
 $$\Vert V_n-P_{n,\ell}^{J_{i_0-1}}\Vert_{L^2(J_{i_0+1})}>\frac{1}{2}\Vert V-P_{\ell}^{J_{i_0-1}}\Vert_{L^2(J_{i_0+1})} >0.$$
Also, $\Vert V_n-P_{n,d}^{I_{i_0-1}}\Vert_{L^2(I_{i_0-1})}\to 0$  for all  $n$ large enough. Then $c_{i_0-1}\to 0$.  So,  for all $n$ large enough $\hat {\mathbf R}\geq r_{i_0-1}$.

	\end{proof}
\end{theorem}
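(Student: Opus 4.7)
The plan is to track the asymptotic behavior of each ratio $c_i$ in the three regimes of $\mathbf{R}$ relative to the grid $\{r_1,\dots,r_K\}$. The main tool is continuity of orthogonal $L^2$-projections combined with Lemma \ref{lemaux2}, which gives $\|V_n-V\|_{L^2([0,b])}\to 0$ almost surely for any fixed $b$. For any fixed interval $I$ and any degree $\ell$, this yields $\|V_n-P_{n,\ell}^{I}\|_{L^2(I)}\to \|V-P_{\ell}^{I}\|_{L^2(I)}$, and the limit is strictly positive precisely when $V$ restricted to $I$ is not a polynomial of degree $\leq\ell$. A small but crucial observation I will use repeatedly: if $V$ equals a polynomial of degree $\leq\ell$ on some interval $[a,b]$ with $a<\mathbf{R}<b$, then (since $V$ equals a polynomial of degree $\leq d$ on $[0,\mathbf{R}]$ and the two polynomials agree on the overlap $[a,\mathbf{R}]$ of positive length) the identity principle for polynomials forces the degree-$\ell$ polynomial to be of degree $\leq d$, extending the polynomial representation of $V$ up to $b$ and contradicting the definition of $\mathbf{R}$. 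So $V$ cannot be a polynomial of degree $\ell$ on any interval strictly crossing $\mathbf{R}$.

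With this in hand, case (i) is quick. If $\mathbf{R}<r_1$, then $V$ is not a polynomial of degree $d$ on $I_1$, so $\|V-P_d^{I_1}\|_{L^2(I_1)}>0$; combined with $U_n\to 0$, this makes the Step 0 inequality hold eventually and the algorithm returns $0$. If $\mathbf{R}\geq r_K$, then every $I_i$ with $i\leq K-1$ lies in $[0,\mathbf{R}]$, so H1 prevents the algorithm from halting at Step 0, and its output must lie in $\{r_1,\dots,r_{K-1}\}$.

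For case (ii) with $r_{i_0}<\mathbf{R}<r_{i_0+1}$, I invoke H2 to choose $\ell$ large enough that $\|V_n-P_{n,\ell}^{J_{i_0+1}}\|_{L^2(J_{i_0+1})}$ eventually falls below $\tfrac12\|V-P_d^{I_{i_0+1}}\|_{L^2(I_{i_0+1})}$, the latter being strictly positive since $I_{i_0+1}$ crosses $\mathbf{R}$; continuity of projections then gives $c_{i_0+1}>1$. For $i\leq i_0$ the numerator vanishes by H1 (since $I_i\subset[0,\mathbf{R}]$), while the denominator on $J_i\supset[r_{i_0},r_{i_0+1}]$ is bounded below by a positive constant thanks to the observation above (applied to the sub-interval $[r_{i_0},r_{i_0+1}]$, which strictly crosses $\mathbf{R}$). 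Thus $c_i\to 0$ and the algorithm outputs $\widehat{\mathbf{R}}=r_{i_0}$. Case (iii) combines these arguments: the same analysis gives $c_{i_0+1}>1$ when $\mathbf{R}=r_{i_0}$ (since $I_{i_0+1}$ still extends past $\mathbf{R}$), hence $\widehat{\mathbf{R}}\leq r_{i_0}$; and when $i_0>1$ a parallel bound on $c_{i_0-1}$, whose denominator interval $J_{i_0-1}=[r_{i_0-1},r_K]$ strictly crosses $\mathbf{R}=r_{i_0}$, gives $c_{i_0-1}\to 0<1$, so $\widehat{\mathbf{R}}\geq r_{i_0-1}$.

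The main obstacle is the uniformity of the choice of $\ell$: a single $\ell$ must simultaneously keep all denominators $\|V_n-P_{n,\ell}^{J_i}\|_{L^2(J_i)}$ small enough that the $c_i$ comparisons behave correctly across all indices at or beyond the critical one. Hypothesis H2 is stated in exactly the right form to supply this uniformity across $i=1,\dots,K-1$, which is why it quantifies over all $i$ rather than a single index. Once this uniformity is in hand, the rest is a routine interplay between the deterministic polynomial structure of $V$ on $[0,\mathbf{R}]$ and the statistical closeness of $V_n$ to $V$ inherited from Lemma \ref{lemaux2}.
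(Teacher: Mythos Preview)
Your proof is correct and follows essentially the same route as the paper's: both rely on the continuity of $L^2$-projections together with Lemma~\ref{lemaux2} to pass from $V_n$ to $V$, and both hinge on the same identity-principle observation that $V$ cannot coincide with a polynomial of any degree on an interval strictly straddling $\mathbf{R}$. Your organization is a bit more compressed (you front-load the identity-principle lemma and reuse it uniformly, whereas the paper re-derives it inline for $[r_{i_0},r_{i_0+1}]$), and you invoke H1 directly to kill the numerators for $i\le i_0$ rather than appealing to the convergence $\|V_n-P_{n,d}^{I_i}\|\to 0$, but these are stylistic rather than substantive differences.
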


\subsection{On the assumptions H1 and  H2 of Theorem \ref{th:consist}}

We now address an obvious question: under which conditions on the set $S$ can we guarantee that assumptions H1 and H2 in Theorem \ref{th:consist} are fulfilled, when $\aleph_n$ is an iid sample? The answer is given in the next result.

\begin{theorem}\label{th.las.constantes}
	Under the assumptions of Lemmas \ref{lemaux2} and \ref{rates1} the algorithm defined in Subsection  \ref{subsec:algorithm} with $U_n= (\log n/n)^{\frac{1}{2d}-\eta} \mbox{ and } \eta>0$, provides, eventually with probability one, a lower bound for  ${\mathbf R}$.
\end{theorem}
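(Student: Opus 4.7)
The plan is to deduce the statement directly from Theorem \ref{th:consist} by verifying, almost surely, the two working hypotheses H1 and H2 of that result. Once both hold, parts (i)--(iii) of Theorem \ref{th:consist} yield $\widehat{\mathbf R}\leq \mathbf R$ eventually with probability one, which is exactly the claimed lower bound. Under the combined hypotheses of Lemmas \ref{lemaux2} and \ref{rates1}, standardness of $P_X$ forces the classical almost-sure rate $\gamma_n=d_H(\aleph_n,S)=O((\log n/n)^{1/d})$, and Lemma \ref{rates1} supplies the same rate for $\max_{p\in\partial S}d(p,\aleph_n)$. These two ingredients drive the whole verification.

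Checking H1 reduces to a short calculation: on any $I_i\subset [0,\mathbf R]$ the restriction of $V$ to $I_i$ is itself a polynomial of degree at most $d$, so by the best-approximation property of $P_{n,d}^{I_i}$,
$$\Vert V_n-P_{n,d}^{I_i}\Vert_{L^2(I_i)}\leq \Vert V_n-V\Vert_{L^2(I_i)}.$$
Substituting the above rates into bound \eqref{2gamman} of Lemma \ref{lemaux2} controls the right-hand side by $O((\log n/n)^{1/(2d)})$ almost surely. Because $\log n/n$ is eventually strictly less than one, the smaller exponent $1/(2d)-\eta$ produces the \emph{larger} sequence $U_n$, so $(\log n/n)^{1/(2d)}=o(U_n)$ and H1 holds eventually almost surely.

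For H2 I would invoke Weierstrass' theorem on the compact interval $J_1=[r_1,r_K]$: given $\varepsilon>0$, choose a degree $\ell$ and a polynomial $Q_\ell$ with $\Vert V-Q_\ell\Vert_{L^\infty(J_1)}<\varepsilon/(2\sqrt{r_K-r_1})$. Since $J_i\subset J_1$ for every $i$, the same $Q_\ell$ serves all $K-1$ subintervals, and the best-approximation property combined with the triangle inequality and Lemma \ref{lemaux2} give
$$\Vert V_n-P_{n,\ell}^{J_i}\Vert_{L^2(J_i)}\leq \Vert V_n-V\Vert_{L^2(J_i)}+\Vert V-Q_\ell\Vert_{L^2(J_i)}<\varepsilon,$$
uniformly in $i=1,\dots,K-1$, for all $n$ large enough. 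Since $K$ is finite, a single almost-sure event makes H1 and H2 hold simultaneously for all $i$, and the conclusion of Theorem \ref{th:consist} then applies.

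The only non-routine ingredient is the continuity of $V$ on $J_1$ needed for Weierstrass; this is a standard property of parallel volumes of a compact set (and automatic on $[0,\mathbf R]$ by the polynomial structure), so I would quote it rather than reprove it. The main obstacle is essentially bookkeeping: making sure that the polynomial approximator $Q_\ell$ can be chosen independently of the particular $J_i$ under consideration, which is resolved by exploiting the nesting $J_i\subset J_1$, and that the sample-based rates from Lemmas \ref{lemaux2} and \ref{rates1} dominate the threshold $U_n$, which is precisely what the gap $\eta>0$ in the exponent of $U_n$ was inserted to guarantee.
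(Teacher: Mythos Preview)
Your argument is correct. The verification of H1 follows the paper exactly: best-approximation, then \eqref{2gamman}, then the exponent gap $\eta>0$.

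For H2 you take a genuinely different route. The paper works directly with $V_n$: it quotes Stach\'o to get absolute continuity of $V_n$, applies a Jackson-type estimate from \cite{goli:91} to obtain
\[
\Vert V_n-P_{n,\ell}^{J_i}\Vert_{L^2(J_i)}\leq \frac{\sqrt{2}\pi}{2(\ell+1)}\Vert V_n'\Vert_{L^2(J_i)},
\]
and then bounds $V_n'(t)={\mathcal H}^{d-1}(\partial B(\aleph_n,t))\leq dV_n(t)/t\leq dV(t)/t$ via \cite{rataj10}. This yields an \emph{explicit} upper bound for $\Vert V_n-P_{n,\ell}^{J_i}\Vert_{L^2(J_i)}$ in terms of $\ell$, $r_1$, $r_K$ and $V(r_K)$, uniformly in $i$ and $n$. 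Your approach instead approximates the limiting function $V$ by Weierstrass on $J_1$ and transfers to $V_n$ through \eqref{convunif}. This is more elementary---it avoids the auxiliary references and uses only the continuity of $V$ (which, as you note, is standard; it is in fact established in the proof of Lemma~\ref{lemaux2})---but it is non-constructive in $\ell$, whereas the paper's bound tells you concretely how large $\ell$ must be to achieve a given $\varepsilon$. Both arguments are valid; the paper's is quantitatively sharper, yours is lighter on machinery.
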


\begin{proof} 
Let us prove that 	this choice of $U_n$ fulfils condition H1 of Theorem \ref{th:consist}. We have to show that if $\mathbf{R}\geq b$ then  for a fixed $\eta>0$,  $\Vert V_n-P_{n}^{I}\Vert_{L^2(I)}<U_n$ where $I= [0,b]$ with $\mathbf{R}\geq b$. Indeed,
	$$  \Vert V_n-P_{n,d}^{I}\Vert_{L^2(I)}\leq \Vert V-V_n\Vert_{L^2(I)}.$$
	So we will prove that  if $\mathbf{R}>b$ then  for any  fixed $\eta>0$,  $\Vert V-V_n\Vert_{L^2(I)}<U_n$. 
	This follows from Theorem 3 in  \cite{cue04}, which proves that with probability one, for all $n$ large enough, 
	$$d_H(\aleph_n, S)\leq \Big(\frac{2}{\delta\omega_d}\frac{\log n}{n}\Big)^{\frac{1}{d}},$$ together with Lemmas \ref{lemaux2} and \ref{rates1}.
	
	Let us prove that  hypothesis H2 of Theorem \ref{th:consist} holds. By Theorem 1 in  \cite{st76}   $V_n$ is absolutely continuous, then by  Lemma 3 in \cite{goli:91}, with probability one, for all $n$, 
	
	$$\Vert V_n-P_{n,\ell}^{J_i}\Vert_{L^2(J_i)}\leq \frac{\sqrt{2}\pi}{2(\ell+1)}\Vert V_n'\Vert_{L^2(J_i)}$$
	for all $i=1,\dots,K-1$, where $\ell$ is even.
	To bound $\Vert V_n'\Vert_{L^2(J_i)}$ we will use again \citet[p. 1665]{rataj10}: in the points $t$ where $V_n'(t)$ exists, it holds that 
	$V_n'(t)= {\mathcal H}^{d-1}(\partial B(\aleph_n,t))\leq d V_n(t)/t$ for $t>0$.
	Since for all $t>0$ $0\leq V_n(t)\leq V(t)$ it follows that 
	$$
	\Vert V_n-P_{n,\ell}^{J_i}\Vert_{L^2(J_i)}\leq \frac{\sqrt{2}\pi}{2(\ell+1)}dV(r_K)\sqrt{\int_{r_i}^{r_K}  1/t^2dt}\leq \frac{\sqrt{2}\pi}{2(\ell+1)} dV(r_K) \sqrt{\frac{1}{r_1}-\frac{1}{r_K}}
	$$
and then H2 holds.

\end{proof}

\section{Estimation of the polynomial coefficients: convergence rates}

Theorem 1 in \cite{cuepat18} states that the coefficients of the best polynomial approximation of the estimated volume function $V_n$,  are consistent estimators of the coefficients of $V$.  In this section we obtain the rates of convergence for those estimators. First we will assume that either $\mathbf{R}$ is known, or we have an underestimation. 
 
In order to do that, we will use the following result.

\begin{lemma}\label{boundcoef} Let $[a,b]\subset \mathbb{R}$. There exists a constant $\kappa_d>0$ such that for any pair of polynomials $f(t)=\sum_{i=0}^d \alpha_i t^i$ and $g(x)=\sum_{i=0}^d \beta_i t^i$ defined on $[a,b]$,
	$$| \alpha_i - \beta_i  |\leq \kappa_d \Vert f-g\Vert_{L^2([a,b])}.$$
	Moreover, $\kappa_d$ depends only on $d$ and $[a,b]$.
	\begin{proof}
This is an immediate consequence of the fact that $\Vert \alpha\Vert_\infty$ and $\Vert\sum_{i=0}^d \alpha_ix^i\Vert_{L_2([a,b])}$ are two norms on $\mathbb{R}^{d+1}$. Hence they are equivalent. 
	\end{proof}
\end{lemma}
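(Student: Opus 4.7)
The plan is to recognize this as a standard consequence of the equivalence of norms on a finite-dimensional vector space. Working on the $(d+1)$-dimensional real vector space $\Pi_d$ of polynomials of degree at most $d$, identified with $\mathbb{R}^{d+1}$ via the coefficient map $p \mapsto (\alpha_0,\ldots,\alpha_d)$, I would introduce two norms and invoke their equivalence.

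First I would form the difference $h = f-g \in \Pi_d$, whose coefficients are $\gamma_i = \alpha_i - \beta_i$, reducing the claim to showing $\max_i|\gamma_i| \leq \kappa_d \Vert h\Vert_{L^2([a,b])}$. Next I would verify that the two candidate norms on $\mathbb{R}^{d+1}$ are indeed norms: the $\ell^\infty$ norm $\Vert\gamma\Vert_\infty = \max_i|\gamma_i|$ is trivially a norm, and the map $\gamma \mapsto \Vert \sum_{i=0}^d \gamma_i t^i\Vert_{L^2([a,b])}$ inherits homogeneity and the triangle inequality from the $L^2$ norm. The only nontrivial point is positive-definiteness: if $\Vert\sum \gamma_i t^i\Vert_{L^2([a,b])} = 0$, then the polynomial vanishes almost everywhere on $[a,b]$, hence identically on $[a,b]$, and since a nonzero polynomial of degree at most $d$ has at most $d$ zeros while $[a,b]$ is an interval of positive length, all $\gamma_i$ must vanish.

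Once both are established as norms on the finite-dimensional space $\mathbb{R}^{d+1}$, the classical equivalence of norms yields a constant $\kappa_d > 0$, depending only on $d$ and $[a,b]$, such that $\Vert\gamma\Vert_\infty \leq \kappa_d \Vert \sum_i \gamma_i t^i\Vert_{L^2([a,b])}$. Substituting back $\gamma_i = \alpha_i-\beta_i$ and $h = f-g$ gives the desired inequality for each coefficient.

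There is no real obstacle here: the argument is the one-line observation already sketched by the authors. If one wanted a quantitative value of $\kappa_d$, one could make it explicit by noting that the Gram matrix $M_{ij} = \int_a^b t^{i+j}\,dt$ (the moment matrix of degree $2d$) is symmetric positive definite on $\Pi_d$, so $\Vert h\Vert_{L^2([a,b])}^2 = \gamma^\top M \gamma \geq \lambda_{\min}(M)\Vert\gamma\Vert_2^2 \geq \lambda_{\min}(M)\Vert\gamma\Vert_\infty^2$, giving $\kappa_d = \lambda_{\min}(M)^{-1/2}$; but the qualitative statement requires nothing beyond finite-dimensional linear algebra.
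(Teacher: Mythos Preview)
Your argument is correct and is precisely the approach taken in the paper: both observe that $\Vert\alpha\Vert_\infty$ and $\Vert\sum_{i=0}^d \alpha_i t^i\Vert_{L^2([a,b])}$ are norms on the finite-dimensional space $\mathbb{R}^{d+1}$ and invoke their equivalence. You simply spell out the verification (in particular the positive-definiteness via the fact that a nonzero degree-$d$ polynomial cannot vanish on an interval) and add an optional quantitative refinement through the moment matrix, neither of which the paper includes.
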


The following two results provide the convergence rates for the estimation of the polynomial coefficients. Not surprisingly, these rates are of ``non-parametric type" with orders $O((\log n/n)^{1/d})$ depending on the dimension.

\begin{proposition} \label{lemhausdist} Let $S\subset \mathbb{R}^d$ be compact such that the Hausdorff dimension of $S$ is and integer value $d'$ with $0<d'\leq d$. Let $X$ be a random variable with support $S$. Assume that $S$ fulfils the standardness condition \eqref{estandar} for $\nu=P_X$ and $\mu_d$ replaced with $\mathcal{H}^{d'},$ the $d'$-dimensional Hausdorff measure on $S$.  Assume further that $S$ has polynomial reach $\mathbf{R}>0$. Let $\aleph_n$ be an iid sample of $X$. Let $P_{n,d}^{I}(t)$ be as in \eqref{eq2}, for $t\in I\subset (0,\mathbf{R}]$.  Then, there exists $\kappa_d>0$ such that, with probability one,
	\begin{equation}\label{eqlemhausdist}
		\limsup_{n\to \infty} \Bigg(\frac{n}{\log n }\Bigg)^{1/d'}\max_{i} |\theta_i-\theta_{in}|\leq 2\kappa_d C\Bigg(\frac{2}{\delta\omega_d}\Bigg)^{1/d'}
	\end{equation}
$\delta$ being the standardness constant given by \eqref{estandar}, and $C>0$. 
\end{proposition}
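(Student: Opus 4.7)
The plan is to reduce the coefficient-wise error to an $L^2$-norm distance between $P^I$ and $P_{n,d}^I$, transfer that distance to the volume-function approximation $\Vert V-V_n\Vert_{L^2(I)}$, and then invoke an almost-sure rate on $\max_{p\in\partial S}d(p,\aleph_n)$ coming from an adaptation of Lemma \ref{rates1} to $\mathcal{H}^{d'}$-standardness.

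\emph{Step 1 (coefficients to $L^2$).} Since $I\subset(0,\mathbf{R}]$ and $V$ is polynomial of degree at most $d$ on $[0,\mathbf{R}]$, the restriction $V|_I$ coincides with $P^I$, whose coefficients are precisely $\theta_0,\dots,\theta_d$. Therefore $P^I-P_{n,d}^I$ is a polynomial of degree at most $d$ on the compact interval $I$ with coefficients $\theta_i-\theta_{in}$, and Lemma \ref{boundcoef} directly yields
\[
\max_i|\theta_i-\theta_{in}|\leq \kappa_d\,\Vert P^I-P_{n,d}^I\Vert_{L^2(I)},
\]
for a constant $\kappa_d$ depending only on $d$ and $I$.

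\emph{Step 2 (polynomials to volumes).} Because $P_{n,d}^I$ is the $L^2(I)$-projection of $V_n$ onto polynomials of degree at most $d$, and $P^I$ lies in this subspace, $\Vert V_n-P_{n,d}^I\Vert_{L^2(I)}\leq \Vert V_n-P^I\Vert_{L^2(I)}=\Vert V_n-V\Vert_{L^2(I)}$. A triangle inequality then gives $\Vert P^I-P_{n,d}^I\Vert_{L^2(I)}\leq 2\Vert V-V_n\Vert_{L^2(I)}$.

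\emph{Step 3 (applying Lemma \ref{lemaux2}).} Writing $I=[a,b]$ with $a>0$, since $\gamma_n=d_H(\aleph_n,S)\to 0$ almost surely we have $2\gamma_n<a$ for all $n$ large enough, so the uniform estimate \eqref{convunif} of Lemma \ref{lemaux2} applies throughout $I$ and yields
\[
\Vert V-V_n\Vert_{L^2(I)}\leq \sqrt{b-a}\,C\max_{p\in\partial S}d(p,\aleph_n).
\]
Chaining the three steps produces $\max_i|\theta_i-\theta_{in}|\leq 2\kappa_d\sqrt{b-a}\,C\max_{p\in\partial S}d(p,\aleph_n)$, and the $\sqrt{b-a}$ factor is absorbed into the redefined $\kappa_d$ (which Lemma \ref{boundcoef} already allows to depend on $[a,b]$).

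\emph{Step 4 (the rate, and the main obstacle).} The remaining task is an almost-sure rate for $\max_{p\in\partial S}d(p,\aleph_n)$ under the modified hypothesis that $P_X$ is standard with respect to $\mathcal{H}^{d'}$ instead of Lebesgue measure. Lemma \ref{rates1} as stated gives rate $(n/\log n)^{-1/d}$; the hard part will be to adapt its proof to the lower-dimensional standardness regime. The adaptation I have in mind mirrors the Lebesgue case: cover $\partial S$ by a minimal family of balls of radius $\epsilon_n=(2\log n/(\delta\omega_d n))^{1/d'}$, use the $\mathcal{H}^{d'}$-standardness lower bound $\pr(X\in B(p,\epsilon_n))\geq \delta\omega_d\epsilon_n^{d'}$ to control the probability that a given ball contains no sample point, and close the argument with a Borel-Cantelli estimate. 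The delicate step is verifying that the covering numbers of $\partial S$ are of order $\epsilon^{-d'}$ so that the union bound remains summable at the prescribed scale; this is where the assumption that the Hausdorff dimension of $S$ is exactly $d'$ enters. The outcome is
\[
\limsup_{n\to\infty}\Big(\frac{n}{\log n}\Big)^{1/d'}\max_{p\in\partial S}d(p,\aleph_n)\leq \Big(\frac{2}{\delta\omega_d}\Big)^{1/d'}\quad\text{a.s.,}
\]
and substituting this into the bound from Step 3 produces \eqref{eqlemhausdist} with constant $2\kappa_d C(2/(\delta\omega_d))^{1/d'}$.
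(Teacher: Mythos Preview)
Your Steps 1--3 match the paper's argument almost exactly; the paper combines them in a slightly more direct way. Rather than quoting Lemma~\ref{lemaux2}, it writes down the sandwich
\[
B\big(S,\,s-d_H(\aleph_n,S)\big)\subset B(\aleph_n,s)\subset B(S,s)
\]
and uses the Lipschitz property of $V$ on $I$ to conclude $\Vert V-V_n\Vert_{L^2(I)}\leq C\,d_H(\aleph_n,S)$, then applies Lemma~\ref{boundcoef} and the same triangle-inequality step you use in Step~2. So the only substantive difference is the choice of controlling quantity: the paper tracks $d_H(\aleph_n,S)$ throughout, while you track $\max_{p\in\partial S}d(p,\aleph_n)$.

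This difference matters in Step~4, and there your argument has a gap. You propose to adapt Lemma~\ref{rates1}, whose proof covers $\partial S$ and bounds the covering number via $L_0(\partial S)<\infty$. The proposition does not assume $L_0(\partial S)<\infty$, and your fallback---that the Hausdorff dimension of $S$ being $d'$ forces the covering numbers of $\partial S$ to be $O(\eps^{-d'})$---is not correct: Hausdorff dimension bounds the box-counting dimension only from below, not from above, so it gives no upper bound on covering numbers. The paper sidesteps this entirely by covering $S$ itself and invoking (an adaptation to $\mathcal{H}^{d'}$-standardness of) Theorem~3 in \cite{cue04} to obtain directly
\[
\limsup_{n\to\infty}\Big(\frac{n}{\log n}\Big)^{1/d'}d_H(\aleph_n,S)\leq \Big(\frac{2}{\delta\omega_d}\Big)^{1/d'}\quad\text{a.s.}
\]
Since $\max_{p\in\partial S}d(p,\aleph_n)\leq d_H(\aleph_n,S)$, this immediately repairs your Step~4 as well; but the cleaner route is simply to work with $d_H(\aleph_n,S)$ from the start, as the paper does, and avoid the detour through Lemmas~\ref{lemaux2} and~\ref{rates1} altogether.
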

\begin{proof}
	Given $s>0$ we have that, for $n$ large enough, 
	$$B(S,s-d_H(\aleph_n,S))\subset B(\aleph_n,s)\subset B(S,s).$$
	This follows from the fact that for any $z\in B(S,s-d_H(\aleph_n,S))$ and $x\in S$, if we denote $z^*$ the closest point to $z$ in $S$,
	$
	d(z,\aleph_n)\leq d(z,z^*)+d(z^*,\aleph_n)\leq s-d_H(\aleph_n,S)+d_H(\aleph_n,S)=s.
	$
	Then,
	$$V(s)-V_n(s)\leq V(s)-V(s-d_H(\aleph_n,S)).$$
	Now, using that $V$ is Lipschitz on $I$ (see the proof of Lemma \ref{lemaux2} for details)
	$\Vert V-V_n\Vert_{L^2(I)}\leq C d_H(\aleph_n,S)$, for some $C>0$. Now,  it can be proved, following the same ideas used in the proof of Theorem 3 in \cite{cue04} that,  with probability one,
	$$\lim_{n\to \infty} \Bigg(\frac{n}{\log n }\Bigg)^{1/d'}d_H(\aleph_n,S)\leq \Bigg(\frac{2}{\delta\omega_d}\Bigg)^{1/d'}.$$
From Lemma \ref{boundcoef},
	\begin{align*}
		&	\max_{i} |\theta_i-\theta_{in}|\leq \kappa_d\Vert V-P_{n,d}^{I}(t)\Vert_{L^2(I)}\\ 
		& \leq \kappa_d \Vert V-V_n\Vert_{L^2(I)}+\kappa_d\Vert V_n-P_{n,d}^{I}(t)\Vert_{L^2(I)}
		\leq 2 \kappa_d\Vert V-V_n\Vert_{L^2(I)}
	\end{align*}
	 which proves \eqref{eqlemhausdist}.
\end{proof}

Moreover, since the output of the algorithm given in subsection \ref{subsec:algorithm} is constant for $n$ large enough, we can obtain the rates  of convergence for the estimators of the coefficients as given by the next result.

 \begin{theorem}\label{th:coef} Under the assumptions of Proposition \ref{lemhausdist}, let $\widehat{\mathbf{R}}$ be the output of the algorithm given in  subsection \ref{subsec:algorithm}.  Assume $r_{i_0}<{\mathbf R}<r_{i_0+1}$ for some  $i_0\in\{1,\ldots.K-1\}$.  Let $P_{n,d}^{[r_1,\widehat{\mathbf{R}}]}(t)=\sum_{i=0}^d \theta_{in}t^i$ for $t\in [r_1,\widehat{\mathbf{R}}]$ as in \eqref{eq2}. Then, with probability one, 
	  $$\max_{i} |\theta_i-\theta_{in}|=O\Big(\Big(\frac{\log n }{n}\Big)^{1/ d'}\Big).$$
\end{theorem}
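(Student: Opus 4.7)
The plan is to reduce this statement to a direct application of Proposition \ref{lemhausdist} by exploiting the fact that the algorithm's output is eventually a deterministic value. The key observation is that we are in the regime $r_{i_0}<\mathbf{R}<r_{i_0+1}$, which is precisely case (ii) of Theorem \ref{th:consist}.

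First, I would invoke Theorem \ref{th:consist}(ii), which (under the hypotheses of Proposition \ref{lemhausdist}, via Theorem \ref{th.las.constantes}, so that H1 and H2 hold almost surely for $n$ large) gives that $\widehat{\mathbf{R}}=r_{i_0}$ for all $n$ large enough, almost surely. Thus on a probability-one event, and for all $n$ beyond some random (but a.s.\ finite) threshold $N(\omega)$, the random interval $[r_1,\widehat{\mathbf{R}}]$ coincides with the fixed deterministic interval $I:=[r_1,r_{i_0}]$. Since $0<r_1$ and $r_{i_0}<\mathbf{R}$, we have $I\subset(0,\mathbf{R}]$, which is exactly the setting of Proposition \ref{lemhausdist}. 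Moreover, on this interval $V(t)=\theta_0+\theta_1 t+\cdots+\theta_d t^d$ exactly (since $I\subset[0,\mathbf{R}]$), so the coefficients being approximated are indeed those of the theorem statement.

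Next, I would apply Proposition \ref{lemhausdist} on this fixed interval $I$. That result yields, almost surely,
\begin{equation*}
\limsup_{n\to\infty}\Big(\tfrac{n}{\log n}\Big)^{1/d'}\max_i|\theta_i-\theta_{in}|\leq 2\kappa_d C\Big(\tfrac{2}{\delta\omega_d}\Big)^{1/d'},
\end{equation*}
where $P_{n,d}^{I}(t)=\sum_{i=0}^d\theta_{in}t^i$. For $n\geq N(\omega)$, the coefficients $\theta_{in}$ appearing in $P_{n,d}^{[r_1,\widehat{\mathbf{R}}]}$ are literally those of $P_{n,d}^{I}$, so the bound transfers with no modification, giving the desired $O\!\left((\log n/n)^{1/d'}\right)$ rate with probability one.

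The main (and only genuine) subtlety is the interaction between the random stopping time $N(\omega)$ at which $\widehat{\mathbf{R}}$ stabilizes to $r_{i_0}$, and the almost-sure convergence rate from Proposition \ref{lemhausdist}. Since both statements hold on a set of probability one, their intersection still has probability one, and on that intersection the rate holds for all $n\geq N(\omega)$; constants can be absorbed in the $O(\cdot)$. I do not expect any real technical obstacle beyond this bookkeeping: the algorithm's output becoming eventually constant is precisely what makes the rate analysis of an \emph{adaptive} procedure reduce to the rate analysis on a fixed sub-interval of $[0,\mathbf{R}]$.
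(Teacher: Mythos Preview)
Your proposal is correct and follows essentially the same approach as the paper: invoke Theorem~\ref{th:consist}(ii) to conclude that $\widehat{\mathbf R}$ is eventually constant and equal to $r_{i_0}$, then transfer the fixed-interval rate to the adaptive estimator. The only cosmetic difference is that you package the second step as a direct application of Proposition~\ref{lemhausdist}, whereas the paper instead cites its ingredients (Lemma~\ref{boundcoef} and the bound $\Vert V-V_n\Vert_{L^2(I)}\leq C\,d_H(\aleph_n,S)$) separately; these are equivalent.
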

\begin{proof} In the proof of Theorem \ref{th:consist} it is shown that in the case $r_{i_0}<{\mathbf R}<r_{i_0+1}$ the output $\hat {\mathbf R}$ of the algorithm is eventually constant, almost surely and a lower bound of the true value ${\mathbf R}$. Then this result is a direct consequence of Lemma \ref{boundcoef} and the fact that  $\Vert V-V_n\Vert_{L^2(I)}\leq C d_H(\aleph_n,S)$. 
\end{proof}

\begin{remark} When $S$ is full-dimensional  and we only have an iid sample in the set, the estimation rate for \(\theta_1\) -the Minkowski content of the boundary of \(S\)- achieved in Theorem \ref{th:coef} matches the one derived in \cite{aar22} using Crofton's formula, where it is assumed that \(\partial S\) is \(C^2\) (hence it has positive Federer's reach), together with the assumption that there is an upper bound for the number of points in the intersection of any straight line and $\partial S$. As shown in \cite{aar22} this rate can be further improved to \(C(\log n /n)^{2/(d+1)}\) for sets with  \(C^2\) boundary, using as a plug-in estimator of $\theta_1$ the \(d-1\)-dimensional Hausdorff measure of the boundary of the \(r\)-convex hull of the sample. 
	However, to our knowledge, there is currently no practical algorithm for computing this estimator when $d>2$.
	
\end{remark}

\section{Computational aspects. Numerical experiments}\label{compasp}

In this section we will study the performance of the algorithm presented in subsection \ref{subsec:algorithm} for three examples of polynomial volume sets in the plane: 
\begin{itemize}
	\item[1]   The ``Pacman set'', defined as $S_P=B(0,1)\cap H^c$  where $H=\{(x,y): x>0 \text{  and }y>0\}$. This set is shown in Figure \ref{pacman} together with   two parallel sets, $B(\aleph_n,0.08)$  and $B(\aleph_n,0.3)$, of a sample $\aleph_n$ drawn on $S$. It can be shown that in this case the polynomial reach is $\mathbf{R}=1$ and its volume function, for $0\leq t\leq 1$, is the polynomial 
	$$V(t)=(5\pi/4-1)t^2+2(1+3\pi/4)t+3\pi/4.$$
\item[2]  The ``Union-of-squares'':  $S_U=[2,4]\times [-1,1]\cup [-1,1]^2$. Its polynomial reach is $\mathbf{R}=1$ and its volume function, for $0\leq t\leq 1$, is the polynomial $V(t)=2\pi t^2+16\pi t+8$. In Figure \ref{cuad} it is shown  the set, and two parallel sets of the samples $B(\aleph_n,0.1)$   and $B(\aleph_n,0.7)$.
\item[3]  The  ``Frame'' set $S_F=[-1,1]^2\setminus M$ where $M\subset (-1,1)^2$ is a square with side length $F<1$. Its polynomial reach is $\mathbf{R}=F/2$ and its volume function is
$$V(t)=\begin{cases}
	4-F^2+4(F+2)t+(\pi-4)r^2& t\in [0,F/2]\\
	4+8r+\pi r^2&t>F/2
\end{cases}.$$
We took $F=1$ for the simulations. 
\end{itemize}

In the  previous section we have studied  the problem of the estimation of the coefficients when the polynomial is adjusted in the interval $[0,\widehat {\mathbf R}]$, $\widehat {\mathbf R}$ being the output of our algorithm.  

It is easy to demonstrate that the volume function for  $S_P$ is differentiable at $t=1$, while the volume function for $S_U$ is not differentiable at $t=1$.  This makes $S_P$ a more challenging example for estimating its polynomial reach, as we have to detect a smoother change point. 

 \begin{figure}
 	\centering
 	\includegraphics[width=0.4\linewidth]{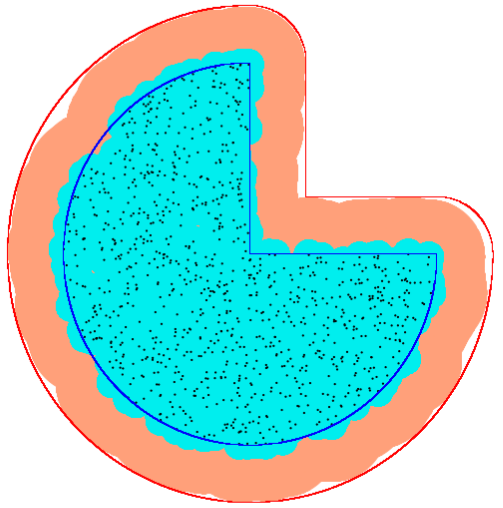}
 	\caption{ In blue solid line, the boundary of the ``Pacman set''; in red solid line the boundary of the corresponding dilation (parallel set) with radius $0.3$.   The dilations, with radii $0.08$ and $0.3$, of a sample of size $n=1000$ correspond to the light blue region and the light blue+orange  region, respectively.}
 	\label{pacman}
 \end{figure}
 
  \begin{figure}
 	\centering
 
 	\includegraphics[width=0.8\linewidth]{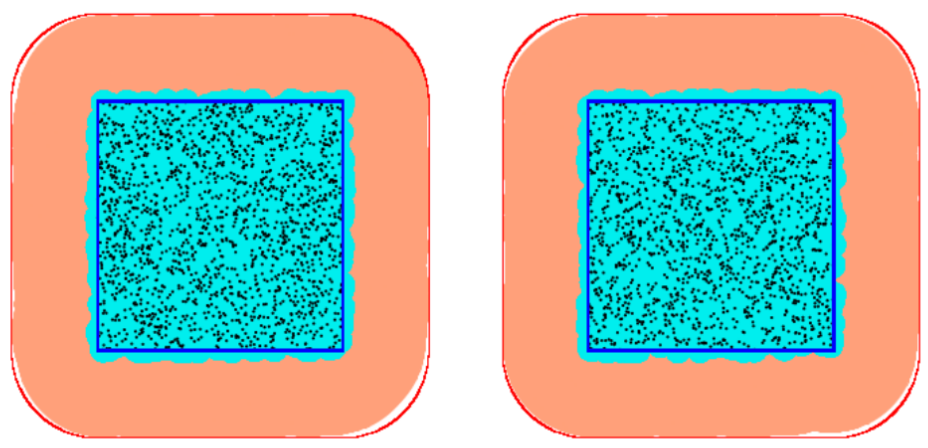}
 	\caption{ In red solid line the boundary of the parallel set of radius $0.7$. In light blue it is shown the dilatation of a sample of size $n=3000$, of radius $0.1$, and the region in orange plus the region in light blue is the dilatation of the sample of radius 0.7 for the union of squares.}
 	\label{cuad}
 \end{figure}

 For both $S_P$ and $S_U$ we have applied the algorithm using three  different grids of values $0<r_1^g<\dots<r_{K_g}^g$ for $g=1,2,3$, denoted by gr1, gr2 and gr3. In the three cases $r_K=1.98$. For $g=1$ and $g=3$, we took $r^g_j-r^g_{j-1}=0.4$  for all $j=2,\dots,r_{K_2}^2$, $r_1^1=0.2$ and $r_1^3=0.3$. For gr2  $r^2_j-r^2_{j-1}=0.3$  for all $j=2,\dots,r_{K_g}^g$ and $r^2_1=0.3$. The grid choice is a crucial point in the algorithm. Our numerical experiments suggest to avoid small values of the increments $r^g_j-r^g_{j-1}$, which could lead to numerical instabilities associated with the estimation of the $L^2$ norms.

In the case of $S_{F}$, where $\mathbf{R}$ is obviously smaller than in the other examples (in fact it is ${\mathbf R}=0.5$) we have used other different grids (denoted again g1, gr2 and gr3) with values $0<r_1^g<\dots<r_{K_g}^g$ for $g=1,2,3$. In this case gr1 is a grid from $r_1^1=0.1$ to  $r_{K_1}^1=1.5$ and the grid step is  $0.2$, gr2 is a grid from $r^2_1=0.1$ to $r_{K_2}^2=1.5$ and the grid step is $0.25$ and gr3 goes from $r^3_1=0.2$ to $r_{K_3}^3=1.5$ with a grid step of  $0.3$. 
 
The computations of $P_{n,d}^{I_i}$ and $P_{n,\ell}^{J_i}$ are performed by using Bernstein polynomials. The  norms are evaluated over grids with step $0.001$.

 As a consequence of the use of three grids, for each simulation run, we have obtained three values of $\widehat{\mathbf{R}}$. In all cases, the calculation of $V_n$ has been done by a Monte-Carlo approximation using a sample of one million points on a square containing the sets.  
 
The outputs for $\widehat{\mathbf{R}}$ in $S_P$, $S_U$ and $S_F$ are summarized in Tables \ref{gr8_pacman}, \ref{cuadrados} and \ref{marco}, respectively (see Appendix B).

  These results suggest that, in all instances, the increase in the parameter $\ell$, reduces the number of cases where $\mathbf{R}$ is overestimated though, as we will see below, this does not always improves the coefficient estimation. On the other hand, the number of overestimations depends on the grid, ranging from 0 to 49, even with 4000 data points (see Table \ref{cuadrados}).
  As expected, the estimation of the reach for ``frame set'' $S_F$ is a more challenging problem than for the $S_P$ or $S_U$, since the reach is smaller, which requires use of finer grids and larger sample sizes. This is also seen in the coefficients estimation.

 \subsection{The case where $\mathbf{R}$ is small}
 
To demonstrate the algorithm's performance when $\mathbf{R}$ is small, we have considered the ``Union-of-squares'' set $S_U$,  where both squares are separated by a distance of $2\lambda$ (thus, $\mathbf{R}=\lambda$). When $r_1>2\lambda$ (with $\lambda=0.01, 0.05$), the algorithm should halt at step 0. For $U_n$, we set $\eta=1/10$. As shown in Table \ref{R_small}, the algorithm generally stops at case 0. Intuitively, this happens because the empirical volume $V_n$ significantly deviates from a polynomial of degree 2 even over  small intervals starting at 0.

 \subsection{Estimation of the coefficients}

   To assess the coefficient estimation,  we run the algorithm 1000 times. The coefficients are estimated by least squares  over a grid  from $0.1$ to $\widehat{\mathbf{R}}$ with a grid step 0.01.  The reason to start at 0.1 is to avoid the obvious infra-estimation of $V(s)$ provided by $V_n(s)$ when $s$ is small (recall that $V_n(0)=0$). Also, we have excluded in the least squares estimation process those runs where $\widehat {\mathbf R}$ coincides with the minimum possible value $r_1$ (see the algorithm above), in order to avoid use of too small intervals leading to inaccurate estimations. The outputs are given  in Tables \ref{coef_pacman5000} and \ref{coef_pacman7000} for $S_P$, Tables \ref{coef_sq5000} and \ref{coef_sq7000} for $S_U$, and in Tables \ref{marco5000} and \ref{marco7000} for $S_F$.

   The results show that the estimation of the first two polynomial coefficients, which are our main target here, are more accurate than the estimation of the highest degree coefficient. On the other hand, as expected, the estimations are better in those grids where the algorithm estimates ${\mathbf R}$ with a higher average value. Lastly, as with the estimation of R, the case of the $S_F$ set is harder and requires more data.

\section{Some conclusions} 

We place ourselves in the, quite broad, context of sets $S$ having a polynomial volume function on some interval $[0,r]$
We deal with the estimation of the polynomial reach, that is, the supremum ${\mathbf R}$ of the values $r$ for which the polynomial condition holds. The available information is just a random sample inside the set $S$ of interest. 

Some  plausible consistent estimators, as that considered in Proposition \ref{prop:consist} might have a poor practical performance. Thus, since the interval $[0,{\mathbf R}]$ of polynomial reach is mainly used in practice to estimate the relevant coefficients of the polynomial volume, a conservative infra-estimation, as that analyzed in Theorem \ref{th:consist}, might be enough. 

The minimum-distance estimators of the polynomial coefficients yield, as a by-product,  estimators of the volume and the surface measure of the set $S$. It is important to note that, unlike other volume and surface area estimators available in the literature (see some references below), this procedure requires only an inside sample, rather than two samples, inside and outside the set. Also, it holds under very general conditions for the set $S$: no convexity or $r$-convexity is assumed, no positive reach or rolling condition is imposed. Finally, the estimation method does not essentially relies on a smoothing parameter (though some tuning parameters are unavoidably involved in the algorithm). For some references and background on volume/surface area estimation see \cite{ari19}, \cite{bal21}, \cite{cue07}, \cite{cue09}, \cite{cue12}, \cite{cue12} and \cite{jim11}.

Quite predictably, given the nature of the problems at hand, the proposed methods only work in practice with large samples sizes (around 5000 sample points) which is not a serious limitation in cases where we adopt a Monte Carlo approach, based on simulated samples, to estimate the geometric parameters (polynomial reach, volume, surface area) and the set is in fact known. 

As an interesting open problem we could mention the development of statistical tests of low dimensionality, based on the estimators of the polynomial coefficients.

\appendix

\section{Proofs of Lemmas 1 and 2}
 
\begin{proof}[Proof of Lemma \ref{lemaux2}]
	Observe that for all $n$ large enough,  $\max_{p\in \partial S}d(p,\aleph_n)<\gamma_n<b/2$.  
	Let us prove that, for all $s\in [2\gamma_n,b]$, 
	\begin{equation}\label{lemaux2eq}
		B\Big(S,s-\max_{p\in \partial S}d(p,\aleph_n)\Big)\subset B(\aleph_n,s)\subset B(S,s).
	\end{equation}
	We have to prove only the first inclusion, the second one follows from $\aleph_n\subset S$.
	Let $z\in B(S,s-\max_{p\in \partial S}d(p,\aleph_n))$. Since  $s\geq 2d_H(S,\aleph_n)$, $S\subset B(\aleph_n,s)$. So if  $z\in S$ we have $z\in B(\aleph_n,s)$. 
	Let us consider the case $z\notin S$.  Let $z^* \in \partial S$ the projection onto $\partial S$ of $z$. Since $z\notin S$, $d(z,z^*)\leq s-\max_{p\in \partial S}d(p,\aleph_n)$.
	Since $z^*\in \partial S$ there exists $X_i\in \aleph_n$ such that $d(z^*,X_i)\leq \max_{p\in \partial S}d(p,\aleph_n)$.
	Then
	$$d(z,\aleph_n)\leq d(z,z^*)+d(z^*,\aleph_n)\leq  s-\max_{p\in \partial S}d(p,\aleph_n)+ \max_{p\in \partial S}d(p,\aleph_n)\leq s,$$
	and we conclude that $z\in B(\aleph_n,s)$. This proves \eqref{lemaux2eq}  and, for $s\in[2\gamma_n,b]$,
	\begin{equation}\label{dif-vols}
		V(s)-V_n(s)\leq \mu\Bigg[B(S,s)\setminus  B\Big(S,s-\max_{p\in \partial S}d(p,\aleph_n)\Big)\Bigg]=V(s)-V\Big(s-\max_{p\in \partial S}d(p,\aleph_n)\Big).\end{equation}
	
	Let us now prove that $V$ is Lipschitz.   First, recall that, as $V$ is monotone, the derivative $V'(t)$ exists except for a countable set of points $t$. Also, when $V'(t)$ does exist, it coincides with, ${\mathcal H}^{d-1}(\partial B(S,t))$, the $d-1$-dimensional Hausdorff measure of the boundary of the parallel set $B(S,t)$, see \citet[Cor. 2.5]{rataj10}.  But, as shown in \citet[p. 1665]{rataj10}, ${\mathcal H}^{d-1}(\partial B(S,t))\leq d (V(t)-V(0))/t$ for $t>0$. 
	
	Now, as we are assuming that $V^{+}(0)$ is finite, the upper limit of ${\mathcal H}^{d-1}(\partial B(S,t))$, as $t\to 0$ is also finite, so that ${\mathcal H}^{d-1}(\partial B(S,t))$ must be finite almost everywhere in a neighborhood $[0,\chi]$ for some $\chi>0$.  Therefore, there is a constant $L_1$ such that $|V'(t)|<L_1$ on $[0,\chi]$.
	On the other hand, according to Theorem 1 in \citet{st76}, for all $\varepsilon>0$, the volume function $V$ is absolutely continuous on $[\varepsilon, b]$ and its derivative can be expressed, almost everywhere, as $t^{d-1}\alpha(t)$, for some monotone decreasing function $\alpha$. As a consequence there is a positive constant $C$ such that $|V'(t)|<C$ almost everywhere on $[0,b]$.  We thus conclude that $V$ is Lipschitz continuous on $[0,b]$ with Lipschitz constant $C$.
	
Now, if we restrict ourselves to the interval $[0,2\gamma_n]$, 
	\begin{equation*}\label{bound2gamman}
		\Vert V-V_n\Vert_{L^2([0,2\gamma_n])}\leq (V(2\gamma_n)^22\gamma_n)^{1/2}	= \Big( V(0)+V^+(0)2\gamma_n+o(2\gamma_n)\Big)(2\gamma_n)^{1/2} \leq  2V(0)(2\gamma_n)^{1/2},
	\end{equation*}
	which proves \eqref{bound0gamman}. This, together  with and \eqref{dif-vols} and the Lipschitz  property of $V$, yields 
	\begin{align*}
		\Vert V-V_n\Vert_{L^2([0,b])}\leq & \Vert V-V_n\Vert_{L^2([0,2\gamma_n])}+\Vert V-V_n\Vert_{L^2([2\gamma_n,b])}\\
		\leq & 2V(0)(2\gamma_n)^{1/2}+\sqrt{b}C\max_{p\in \partial S}d(p,\aleph_n)
	\end{align*}
for all $n$ large enough.
\end{proof}
 
\begin{proof}[Proof of Lemma \ref{rates1}] Let us prove that from  $L_0(\partial S)<\infty$ it follows that there  exist $\varepsilon_0$ small enough such that we can cover $\partial S$ by $6L_0(\partial S)/(\omega_{d}\eps^{d-1})$ balls of radius $3\eps$ centered at $\partial S$ for all $0<\eps<\eps_0$. To prove this let us cover $\partial S$ with a minimal covering of radius $3\eps$ centred at points $\{x_1,\dots,x_k\}\subset \partial S$. Then $B(x_i,\eps)\subset B(\partial S, \eps)$ and $B(x_i,\eps)\cap B(x_j,\eps)=\emptyset$ for all $i\neq j$. Then $\mu(B(\partial S,\eps))\geq k\omega_d \eps^d$. Thus, for $\eps$ small enough, $\mu(B(\partial S,\eps))/(2\eps)\leq 2L_0(\partial S)$ and $k\leq 6L_0(\partial S)/(\omega_d\eps^{d-1})$. 

Now note that, given $\varepsilon>0$ small enough, $\max_{p\in \partial S}d(p,\aleph_n)>\varepsilon$ entails that at least one of the balls $B(x_i,\varepsilon)$ does not contain any sample point, thus
\begin{equation}\label{eq:eps}
{\mathbb P}\Big(\max_{p\in \partial S}d(p,\aleph_n)>\varepsilon\Big)\leq k\Big(1-\delta\omega_d\varepsilon^d\Big)^n\leq \frac{6L_0(\partial S)}{\omega_d\eps^{d-1}}\Big(1-\delta\omega_d\varepsilon^d\Big)^n
\end{equation}
Let us denote   $\zeta_n=(n/\log n )^{1/d}$,  and $\kappa=(2/(\delta\omega_d))^{1/d}$. Using \eqref{eq:eps} for $\varepsilon=\kappa/\zeta_n$, we have
	$$\mathbb{P}\Big(\zeta_n\max_{p\in \partial S}d(p,\aleph_n)> \kappa \Big)\leq \zeta_n^{d-1}6\frac{L_0(\partial S)}{\omega_d\kappa^{d-1}}\Big(1- \frac{2}{\zeta_n^d}\Big)^n= \zeta_n^{d-1}6\frac{L_0(\partial S)}{\omega_d\kappa^{d-1}}\Big(1- \frac{\log n^2}{n}\Big)^n\leq \zeta_n^{d-1}6\frac{L_0(\partial S)}{\omega_d\kappa^{d-1}}\frac{1}{n^2}$$
	
	The result follows now from Borel-Cantelli's lemma.
\end{proof}

 \section{Tables}
 
 \begin{table}[hbt!]
 	\footnotesize
 	\centering
 	\begin{tabular}{cccc|ccc|ccc}
 		& \multicolumn{3}{c}{$n=2000$}   & \multicolumn{3}{c}{$n=3000$}  & \multicolumn{3}{c}{$n=4000$} \\
 		     &  gr1  &  gr2  &  gr3 & gr1  &  gr2  &  gr3& gr1  &  gr2  &  gr3 \\ \hline
 		Mean & 0.6212 & 0.7692 & 0.7196 &0.606  & 0.7431 &   0.7104 &0.6024 & 0.738  &   0.704  \\
 		S.d. & 0.2880 &  0.15  & 0.0877 &0.0519 & 0.1496 &   0.0632 & 0.03  & 0.1493 &   0.0387\\
 		     &   0    &   2    &   50 &0    &   0    &     26 & 0    &   0    &     10 \\ \hline
 		Mean & 0.6096 & 0.7398 & 0.7096 &0.6024 & 0.7149 & 0.7044  & 0.6016 & 0.6954 & 0.7024\\
 		S.d. & 0.0656 & 0.1507 & 0.2236 &0.04   & 0.1456 & 0.1516 & 0.0245 & 0.1396 & 0.03 \\
 		     &   0    &   0    &   24  &0    &   0    &   13&0    &   0    &   6
 	\end{tabular}
 	\caption{\footnotesize Mean and standard deviation of $\widehat{\mathbf{R}}$ for $S_P$ over 1000 replications, for each of the three grids considered.  We took $\ell=8$ for rows 1,2,3 and $\ell=10$ for rows  4,5,6.   In rows 3 and 6  we show the number of times in the 1000 replications where the algorithm provides an overestimation of the true value $\mathbf{R}=1$.}
 	\label{gr8_pacman}
 \end{table}

 \begin{table}[hbt!]
 	\footnotesize
 	\centering
 	\begin{tabular}{cccc|ccc|ccc}
 		& \multicolumn{3}{c}{$n=2000$}  & \multicolumn{3}{c}{$n=3000$} &\multicolumn{3}{c}{$n=4000$} \\
 &  gr1  &  gr2  &  gr3 & gr1   &  gr2  &   gr3  & gr1   &  gr2  &   gr3     \\ \hline
 		Mean & 0.5984 & 0.696  & 0.7116 & 0.5948  & 0.6735 &  0.6956 & 0.602  & 0.7026 &   0.7084   \\
 		S.d.  & 0.0819 & 0.1432 & 0.1044 &0.0630  & 0.1311 &  0.0917 &0.0866 & 0.1435 &  0.1077   \\
 		&   0    &   3    &   49&0    &   2    &    21 & 0    &   1    &     47 \\  \hline
 		Mean & 0.5956 & 0.6792 & 0.698  &0.5856 & 0.6624 & 0.6884&0.5828 & 0.648  & 0.6828 \\
 		S.d.  & 0.0843 & 0.1357 & 0.1049 &0.0917 & 0.1237 & 0.0877 &0.0849 & 0.1140 & 0.0954 \\
 		&   0    &   2    &   32&0    &   0    &   10&0    &   0    &   8
 	\end{tabular}
 	\caption{\footnotesize Mean and standard deviation of $\widehat{\mathbf{R}}$ for $S_U$ over 1000 replications, for each of the three grids considered.  We took $\ell=8$ for rows 1,2,3 and $\ell=10$ for rows  4,5,6.   In rows 3 and 6  we show the number of times in the 1000 replications that the algorithm provides an overestimation of the true value $\mathbf{R}=1$.}
 	\label{cuadrados}
 \end{table}

  \begin{table}[hbt!]
 	\footnotesize
 	\centering
 	\begin{tabular}{cccc|ccc|ccc}
 		& \multicolumn{3}{c}{$n=5000$}  & \multicolumn{3}{c}{$n=7000$} &\multicolumn{3}{c}{$n=9000$} \\
 &  gr1  &  gr2  &  gr3 & gr1   &  gr2  &   gr3  & gr1   &  gr2  &   gr3     \\ \hline
 		Mean &  0.511  & 0.4265 & 0.494  & 0.5232 & 0.432  &   0.4973  & 0.5242 & 0.4415 &   0.5012   \\
 		S.d.  & 0.1073 & 0.1180 & 0.0625 &0.1025 & 0.1183 &   0.0663  &0.0985 & 0.1215 &   0.0465   \\
 		&   171   &  309   &    192   &  330   &     20  &   10&187   &  366   &     14\\  \hline
 		
 		Mean & 0.4698&  0.387 & 0.4709  &0.4766 & 0.3865  &  0.4739  &	0.4816 & 0.398  &0.4823 \\
 		S.d.  &   0.1128&  0.0982&0.0898 &0.1054 & 0.0926 &  0.0887 & 0.1004&  0.1041  &0.0732  \\
 		&    95  &  162   &   1   &  87   &   152   &     4 & 84 & 201 & 2
 	\end{tabular}
	\caption{\footnotesize Mean and standard deviation of $\widehat{\mathbf{R}}$ for $S_F$ over 1000 replications, for each of the three grids considered.  We took $\ell=30$ for rows 1,2,3 and $\ell=50$ for rows  4,5,6.   In rows 3 and 6  we show the number of times in the 1000 replications that the algorithm provides an overestimation of the true value $\mathbf{R}=0.5$.}
	\label{marco}
 \end{table}

  \begin{table}[hbt!]
 	\footnotesize
 	\centering
 	\begin{tabular}{cccc|ccc|ccc}
 		& \multicolumn{3}{c}{$n=1000$, $\lambda=0.1$}  & \multicolumn{3}{c}{$n=1500$, $\lambda=0.05$} &\multicolumn{3}{c}{$n=1800$, $\lambda=0.05$}  \\
$r_1=$ & 0.15 & 0.2 &         0.25    &0.12 & 0.15 &         0.20  & 0.12 & 0.15 &         0.20 \\ \hline
 &  64  &  0  &          0&118  &  0   &          0&  0   &  0   &          0
 	\end{tabular}
		\caption{\footnotesize Number of times, in 1000 replications, that the algorithm wrongly does not stop at step $0$ for the ``Union of squares'' with distance $2\lambda$ between both squares.}
		\label{R_small}
 \end{table}

\begin{table}[hbt!]
	\footnotesize
	\centering
	\begin{tabular}{|ccc|ccc|ccc|l}
		\multicolumn{3}{c|}{gr1} & \multicolumn{3}{c|}{gr2} & \multicolumn{3}{c|}{gr3} & true  values \\
		Mean  &   S.d.   &   MAD   &  Mean  &   S.d.   &   MAD   &  Mean  &   S.d.   &   MAD   &              \\
		2.3117 & 0.1989 & 0.1599  & 2.2916 & 0.1908 & 0.1520  & 2.3133 & 0.1946 & 0.1576  & 2.3562       \\
		6.7734 & 1.1055 & 0.8831  & 6.9120 & 0.9415 & 0.7355  & 6.7616 & 0.9610 & 0.7695  & 6.7124       \\
		2.8358 & 1.5069 & 1.2089  & 2.6411 & 1.1829 & 0.8903  & 2.8536 & 1.1547 & 0.9374  & 2.9270
	\end{tabular}
	\caption{\footnotesize Mean, standard deviation (S.d.) and median absolute deviation (MAD) of the estimated polynomial coefficients for $S_P$, over 1000 replications  with $n=5000$. In the last column we show the theoretical true values. The algorithm is applied with $\ell=8$.}
	\label{coef_pacman5000}
\end{table}

 \begin{table}[hbt!]
	\footnotesize
	\centering
	\begin{tabular}{|ccc|ccc|ccc|l}
		\multicolumn{3}{c|}{gr1} & \multicolumn{3}{c|}{gr2} & \multicolumn{3}{c|}{gr3} &true values  \\
		Mean  &  S.d.    &   MAD   &  Mean  &  S.d.   &   MAD   &  Mean  &  S.d.   & MAD     &               \\
		2.3280 & 0.1685 & 0.1348 & 2.3149 & 0.1665 & 0.1326 & 2.3334 & 0.1666 & 0.1338 & 2.3562 \\
		6.8039 & 0.9296 & 0.7344 & 6.8939 & 0.8400 & 0.6679 & 6.7627 & 0.8102 & 0.6469 & 6.7124 \\
		2.7740 & 1.2886 & 1.0202 & 2.6493 & 1.0897 & 0.8265 & 2.8385 & 0.9802 & 0.7753 & 2.9270 \\
		
	\end{tabular}
	\caption{\footnotesize Mean, standard deviation (S.d.) and median absolute deviation (MAD) of the estimated polynomial coefficients for $S_P$, over 1000 replications, with $n=7000$. In the last column we show the theoretical true values. The algorithm is applied with $\ell=8$.}
	\label{coef_pacman7000}
\end{table}

\begin{table}[hbt!]
	\footnotesize
	\centering
	\begin{tabular}{|ccc|ccc|ccc|l}
		\multicolumn{3}{c|}{gr1} & \multicolumn{3}{c|}{gr2} & \multicolumn{3}{c|}{gr3} &true values \\
		Mean   &  S.d.  &  MAD   &  Mean   &  S.d.  &  MAD   &  Mean   &  S.d.  &  MAD   &        \\
		7.7389  & 0.5237 & 0.4182 & 7.7240  & 0.5159 & 0.4140 & 7.7438  & 0.5268 & 0.4211 & 8      \\
		16.3034 & 2.6660 & 2.1703 & 16.3761 & 2.6315 & 2.1187 & 16.2319 & 2.4377 & 1.9510 & 16     \\
		5.7662  & 3.5644 & 2.8850 & 5.6585  & 3.3860 & 2.7323 & 5.8778  & 2.8724 & 2.3105 & 6.2832
	\end{tabular}
	\caption{\footnotesize Mean, standard deviation (S.d.) and median absolute deviation (MAD) of the estimated polynomial coefficients for $S_U$, over 1000 replications, with $n=5000$. In the last column we show the theoretical true values. The algorithm is applied with $\ell=8$.}
	\label{coef_sq5000}
\end{table}

\begin{table}[hbt!]
	\footnotesize
	\centering
	\begin{tabular}{|ccc|ccc|ccc|l}
		\multicolumn{3}{c|}{gr1} & \multicolumn{3}{c|}{gr2} & \multicolumn{3}{c|}{gr3} &true  values \\
		Mean   &  S.d.  &  MAD   &  Mean   &  S.d.   &  MAD   &  Mean   &  S.d.   &  MAD   &        \\
		7.7651  & 0.4452 & 0.3571 & 7.7539  & 0.4485 & 0.3597 & 7.7704  & 0.4394 & 0.3490 & 8      \\
		16.2247 & 2.2649 & 1.8021 & 16.2884 & 2.2669 & 1.8081 & 16.1809 & 2.0020 & 1.5940 & 16     \\
		5.8222  & 3.1254 & 2.4874 & 5.7306  & 3.0447 & 2.4010 & 5.9165  & 2.3929 & 1.9001 & 6.2832
	\end{tabular}
	\caption{\footnotesize For the union of squares: mean, standard deviation (S.d.) and MAD of the estimated polynomial coefficients, over 1000 replications, for $n=7000$. In the last column we show the theoretical true values. The algorithm is applied with $\ell=8$.}
	\label{coef_sq7000}
\end{table}

\begin{table}[hbt!]
	\footnotesize
	\centering
	\begin{tabular}{|ccc|ccc|ccc|l}
		\multicolumn{3}{c|}{gr1}  & \multicolumn{3}{c|}{gr2}  & \multicolumn{3}{c|}{gr3} &true values  \\
		Mean   &   S.d. &  MAD   &  mean   &   S.d.  &  MAD   &  Mean   &  S.d.  &  MAD   &         \\
		2.8962 & 0.2497 & 0.1943 & 2.9062 & 0.2562 & 0.2032 & 2.9096 & 0.2197 & 0.1756 & 3 \\
		12.3068 & 2.0234 & 1.4270 & 12.1894 & 2.1690 & 1.6791 & 12.1477 & 1.4384 & 1.1379 & 12 \\
		-1.4260 & 4.1637 & 2.5378 & -1.1316 & 4.5367 & 3.3791 & -1.0473 & 2.2401 & 1.7620 & -0.8584 \\
	\end{tabular}
	\caption{\footnotesize Mean, standard deviation (S.d.) and median absolute deviation (MAD) of the estimated polynomial coefficients for $S_F$, over 1000 replications, with $n=5000$. In the last column we show the theoretical true values. The algorithm is applied with $\ell=30$.}
	\label{marco5000}
\end{table}

\begin{table}[hbt!]
	\footnotesize
	\centering
	\begin{tabular}{|ccc|ccc|ccc|l}
		\multicolumn{3}{c|}{gr1} & \multicolumn{3}{c|}{gr2} & \multicolumn{3}{c|}{gr3} & true values \\
		 Mean   &  S.d.  &  MAD   &  Mean   &  S.d.  &  MAD   &  Mean   &   S.d.  &  MAD   &             \\
		2.9329  & 0.2098 & 0.1661 & 2.9201  & 0.2197 & 0.1750 & 2.9350  & 0.1987 & 0.1585 & 3           \\
		12.1758 & 1.5169 & 1.1275 & 12.2945 & 1.7744 & 1.3836 & 12.1525 & 1.2276 & 0.9920 & 12          \\
		-1.1706 & 2.9214 & 1.9174 & -1.4192 & 3.6725 & 2.7676 & -1.1160 & 1.9110 & 1.5321 & -0.8584
	\end{tabular}
	\caption{\footnotesize Mean, standard deviation (S.d.) and median absolute deviation (MAD) of the estimated polynomial coefficients  for $S_F$, over 1000 replications, with $n=7000$. In the last column we show the theoretical true values. The algorithm is applied with $\ell=30$.}
	\label{marco7000}
\end{table}

%


\clearpage 

 \normalsize


\begin{thebibliography}{9}
 	
 	\bibitem[\protect\citeauthoryear{Aaron et al.}{2017}]{aar17} 	
 	Aaron, C., Cholaquidis, A. and Cuevas, A. (2017).
 	Detection of low dimensionality and data denoising via set estimation techniques.
 	\textit{Electronic Journal of Statistics}, \textbf{11}, 4596--4628.
 	
 	\bibitem[\protect\citeauthoryear{Aaron et al.}{2022}]{aar22} 	
 	Aaron, C., Cholaquidis, A. and Fraiman, R. (2022).
 	Estimation of surface area.
 	\textit{Electronic Journal of Statistics}, \textbf{16}, 3751--3788.
 	
 	\bibitem[\protect\citeauthoryear{Arias-Castro et al.}{2019}]{ari19} 	
 	Arias Castro, E., Pateiro-L\'opez, B., Rodr\'iguez-Casal, A. (2019). Minimax estimation of the volume of a set under the rolling ball condition. \textit{ Journal of the American Statistical Association-Theory and Methods}, \textbf{114}. 1162-1173. 

\bibitem[\protect\citeauthoryear{Ambrosio et al.}{2008}]{amb08} 	
 	Ambrosio, L., Colesanti, A., and Villa, E. (2008). 
 	Outer Minkowski content for some classes of closed sets. 
 	\textit{Mathematische Annalen}, \textbf{342}(4), 727--748.

\bibitem[\protect\citeauthoryear{Baldin and Rei\ss}{2021}]{bal21} 	
Baldin, N. and Rei\ss, M. (2021). Unbiased estimation of the volume of a convex body.
\textit{Stochastic Processes and their Applications} \textbf{126}, 3716--3732.

\bibitem[\protect\citeauthoryear{Berrendero et al}{2014}]{berr}
Berrendero, Jos{\'e} R and Cholaquidis, Alejandro and Cuevas, Antonio and Fraiman, Ricardo. (2014). A geometrically motivated parametric model in manifold estimation.
\textit{Statistics}, \textbf{48} (5), 983--1004.
 

\bibitem[\protect\citeauthoryear{Bernstein et al}{2000}]{bern}
{Bernstein, M., De Silva, V., Langford, J. C., and Tenenbaum, J. B.} (2000). 
Graph approximations to geodesics on embedded manifolds.  
\textit{Technical report, Department of Psychology}, Stanford University. 961--968.
 	
 	
 \bibitem[\protect\citeauthoryear{Chazal et. al}{2017}]{chazal2017}
\rm{Chazal, F., Cohen-Steiner, D., Lieutier, A., M\'erigot, Q., and Thibert, B.} (2017). 
Inference of curvature using tubular neighborhoods. 
\textit{In Modern Approaches to Discrete Curvature.} 133--158. Springer, Cham.

 \bibitem[\protect\citeauthoryear{Cholaquidis, Fraiman, Moreno}{2022}]{chola22}
\rm{Cholaquidis, A., Fraiman, R., and Moreno, L.} (2022). 
 Universally consistent estimation of the reach. Journal of Statistical Planning and Inference, \textbf{225}, 110--120. 

	\bibitem[\protect\citeauthoryear{Cholaquidis et al.}{2014}]{chola:14}
{\rm Cholaquidis, A., Cuevas, A.  and Fraiman, R.} (2014)
On Poincar\'{e} cone property.
\emph{Ann. Statist.}, {\bf 42}, 255--284.


\bibitem[\protect\citeauthoryear{Cuevas and Rodr{\'{\i}}guez-Casal}{2004}]{cue04}
\rm{Cuevas, A. and Rodr{\'{\i}}guez-Casal, A.} (2004).
On boundary estimation.
\textit{Adv. in Appl. Probab.}  \textbf{36} 340--354.

\bibitem[\protect\citeauthoryear{Cuevas}{2009}]{cue09}
\rm{Cuevas, A.} (2009).
Set estimation: Another bridge between statistics and geometry. \textit{BEIO}, \textbf{25}, 71--85.



\bibitem[\protect\citeauthoryear{Cuevas et al.}{2007}]{cue07}
Cuevas, A., Fraiman, R. and Rodr\'{\i}guez-Casal, A. (2007). 
A nonparametric approach to the estimation of lengths and surface areas.
\textit{Ann. Statist.} \textbf{35}, 1031-1051.


\bibitem[\protect\citeauthoryear{Cuevas, Fraiman and Pateiro-L\'opez}{2012}]{cue12}
\rm{Cuevas, A., Fraiman, R. and Pateiro-L\'opez, B.} (2012).
On statistical properties of sets fulfilling rolling-type conditions.
\textit{Adv. in Appl. Probab.}  \textbf{44},  311--329.

\bibitem[\protect\citeauthoryear{Cuevas, Fraiman and Gy\"orfi, L.}{2013}]{cue13}
\rm{Cuevas, A., Fraiman, R. and Gy\"orfi, L.} (2013).
Towards a universally consistent estimator of the Minkowski content. \textit{ESAIM: Probability and Statistics}, \textbf{17}, 359--369. 






 
 
 \bibitem[\protect\citeauthoryear{Cuevas and Pateiro-L\'opez}{2018}]{cuepat18}
\rm{Cuevas, A., and Pateiro-L\'opez, B.} (2018). 
Polynomial volume estimation and its applications. 
\textit{Journal of Statistical Planning and Inference}, \textbf{196}, 174-184.
 
 	
 \bibitem[\protect\citeauthoryear{Federer}{1959}]{fed59}
 \rm{Federer, H.} (1959).
 Curvature measures.
 \textit{Trans. Amer. Math. Soc.}  \textbf{93} 418--491.
 
 \bibitem[\protect\citeauthoryear{Federer}{1969}]{fed:69}
 \rm{Federer, H.} (1969).
 Geometric measure theory
 \textit{Springer}.

 \bibitem[\protect\citeauthoryear{Golitschek}{1991}]{goli:91}
\rm{Golitschek, M.} (1991).
M\"untz-Jackson Theorems in $L_p(0,1)$, $1\leq p<2$
\textit{Journal of Approximation Theory.} \textbf{67}, 337--346. 
  
  \bibitem[\protect\citeauthoryear{Jim\'enez and Yukich}{2011}]{jim11}
  Jim\'enez, R. and Yukich, J.E. (2011).
  Nonparametric estimation of surface integrals.
  \textit{Ann. Statist}. \textbf{39}, 232--260.
  
 \bibitem[\protect\citeauthoryear{Penrose}{2023}]{pen23}
 \rm{Penrose, M. D.} (2023).
Random Euclidean coverage from within. \textit{Probability Theory and Related Fields}, \textbf{185}, 747--814.
 
\bibitem[\protect\citeauthoryear{Rataj and Winter}{2010}]{rataj10}
\rm{Rataj, J. and Winter, S.}(2010). On Volume and Surface Area of Parallel Sets. Indiana University Mathematics Journal, \textbf{59}, 1661--1685.
 
 
 
\bibitem[\protect\citeauthoryear{Stacho, L. L.}{1976}]{st76} 
\rm{Stach\'o, L. L.}(1976)
On the volume function of parallel sets. 
Acta Sci. Math., \textbf{38}, 365-374.
 
 
\end{thebibliography}
\end{document}